\numberwithin{equation}{section}
\theoremstyle{plain}
\newtheorem{theorem}{Theorem}[section]
\newtheorem{lemma}{Lemma}[section]
\newtheorem{corollary}{Corollary}[section]
\theoremstyle{definition}
\newtheorem{remark}{Remark}[section]
\def\now{%
\minute=\time%
\hour=\time \divide \hour by 60%
\hourMins=\hour \multiply\hourMins by 60%
\advance\minute by -\hourMins%
\zeroPadTwo{\the\hour}:\zeroPadTwo{\the\minute}%
}
\def\zeroPadTwo#1{\ifnum #1<10 0\fi#1}
\renewcommand{\cite}{\citet}
\def\^#1{\ifmmode {\mathaccent"705E #1} \else {\accent94 #1} \fi}
\def\~#1{\ifmmode {\mathaccent"707E #1} \else {\accent"7E #1} \fi}
\def\*#1{#1^\ast}
\edef\-#1{\noexpand\ifmmode {\noexpand\bar{#1}} \noexpand\else \-#1\noexpand\fi}
\def\>#1{\vec{#1}}
\def\.#1{\dot{#1}}
\def\atop{\@@atop}
\def\*#1{\mathscr{#1}}
\renewcommand{\leq}{\leqslant}
\renewcommand{\geq}{\geqslant}
\newcommand{\eps}{\varepsilon}
\renewcommand{\eps}{\varepsilon}
\newcommand{\eq}{\eqref}
\newcommand{\diag}{{\mathop{\mathrm{diag}}}}
\newcommand{\IE}{\mathbbm{E}}
\newcommand{\IP}{\mathbbm{P}}
\newcommand{\Var}{\mathop{\mathrm{Var}}\nolimits}
\newcommand{\Cov}{\mathop{\mathrm{Cov}}}
\newcommand{\tr}{\mathop{\mathrm{tr}}}
\def\be#1{\begin{equation*}#1\end{equation*}}
\def\ben#1{\begin{equation}#1\end{equation}}
\def\bes#1{\begin{equation*}\begin{split}#1\end{split}\end{equation*}}
\def\besn#1{\begin{equation}\begin{split}#1\end{split}\end{equation}}
\def\bm#1{\begin{multline*}#1\end{multline*}}
\def\bmn#1{\begin{multline}#1\end{multline}}
\def\ba#1{\begin{align*}#1\end{align*}}
\def\ban#1{\begin{align}#1\end{align}}
\def\norm#1{\Vert#1\Vert}
\def\mid{\vert}
\def\angle#1{{\langle#1\rangle}}
\def\beqn#1\eeqn{\begin{align}#1\end{align}}
\def\beq#1\eeq{\begin{align*}#1\end{align*}}
\def\E{{\IE}}
\def\P{{\IP}}
\newcommand{\tha}{\tilde h_\alpha}
\newcommand{\ol}[1]{\overline{#1}}
\newcommand{\mcl}[1]{\mathcal{#1}}
\newcommand{\bs}[1]{\boldsymbol{#1}}
\DeclareMathOperator{\influence}{Inf}
\renewcommand\section{\@startsection {section}{1}{\z@}%
{-3.5ex \@plus -1ex \@minus -.2ex}%
{1.3ex \@plus.2ex}%
{\center\small\sc\mathversion{bold}\MakeUppercase}}
\def\subsection#1{\@startsection {subsection}{2}{0pt}%
{-3.5ex \@plus -1ex \@minus -.2ex}%
{1ex \@plus.2ex}%
{\bf\mathversion{bold}}{#1}}
\def\subsubsection#1{\@startsection{subsubsection}{3}{0pt}%
{\medskipamount}%
{-10pt}%
{\normalsize\itshape}{\kern-2.2ex. #1.}}
\def\blfootnote{\xdef\@thefnmark{}\@footnotetext}
\begin{document}

\title{New Error Bounds in Multivariate Normal Approximations via Exchangeable Pairs with Applications to Wishart Matrices and Fourth Moment Theorems}
\author{Xiao Fang and Yuta Koike}
\date{\it The Chinese University of Hong Kong and The University of Tokyo} 
\maketitle

\noindent{\bf Abstract:} We extend Stein's celebrated Wasserstein bound for normal approximation via exchangeable pairs to the multi-dimensional setting. 
As an intermediate step, we exploit the symmetry of exchangeable pairs to obtain an error bound for smooth test functions.
We also obtain a continuous version of the multi-dimensional Wasserstein bound in terms of fourth moments.
We apply the main results to multivariate normal approximations to Wishart matrices of size $n$ and degree $d$, where we 
obtain the optimal convergence rate $\sqrt{n^3/d}$ 
under only moment assumptions,
and to degenerate $U$-statistics and Poisson functionals, where we strengthen a few of the fourth moment bounds in the literature on the Wasserstein distance.

\medskip

\noindent{\bf AMS 2010 subject classification: }  60F05, 62E17

\noindent{\bf Keywords and phrases:}  Central limit theorem, fourth moment theorem, exchangeable pairs, Wishart matrix, Stein's method

\section{Introduction and Main Results}\label{sec1}

Let $W$ be a random variable with $\E(W)=0$ and $\Var(W)=1$.
\cite{St86} introduced the following exchangeable pair approach to proving central limit theorems for $W$ with error bounds.
Suppose we can construct another random variable $W'$ on the same probability space 
such that $(W,W')$ and $(W',W)$ have the same distribution (exchangeable), and moreover,
\be{
\E(W'-W|W)=-\lambda W
}
for some positive constant $\lambda$ (linearity condition).
Then we have (cf. \cite[Theorem 1, Lecture III]{St86} and \cite[Theorem 4.9]{ChGoSh11}):
\bes{
d_{\mathcal{W}}(W,Z):=&\sup_{h\in\mathrm{Lip}(\mathbb{R},1)}|\E h(W)-\E h(Z)|\\
\leq & \sqrt{\frac{2}{\pi}}\E\Big|   \E \big[1-\frac{1}{2\lambda} (W'-W)^2 | W \big]  \Big|+\frac{1}{2\lambda}\E|W'-W|^3,
}
where $d_{\mathcal{W}}$ denotes the Wasserstein distance, $Z\sim N(0,1)$, and $\mathrm{Lip}(\mathbb{R},1)$ denotes the set of 1-Lipschitz functions $h$ on $\mathbb{R}$.

Stein's exchangeable pair approach and its variants have found wide applications in normal approximations. These applications include, but are not limited to, the binary expansion of a random integer (\cite{Di77} and \cite[Lecture IV]{St86}); the anti-voter model (\cite{RiRo97}); the representation theory of permutation groups (\cite{Fu05}); character ratios (\cite{ShSu06}); the Erd\"os-Kac theorem (\cite{Ha09}); the Curie-Weiss model (\cite{ChFaSh13}); combinatorial central limit theorems (\cite{ChFa15}); and degenerate $U$-statistics (\cite{DoPe17}). \cite{ChSh11} extended the approach to non-normal approximations and \cite{ShZh19} used the approach to obtain optimal error bounds on the Kolmogorov distance for both normal and non-normal approximations.

\medskip

\textbf{Basic setting.}\quad  Stein's exchangeable pair approach has been extended to the multi-dimensional setting.
Let $d\geq 2$ be an integer.
We follow the general setting of \cite{ReRo09} and assume that for a $d$-dimensional random vector $W$,
we can construct another random vector $W'$ on the same probability space such that 
\be{
\mathcal{L}(W,W')=\mathcal{L}(W',W),
} 
and moreover,
\ben{\label{1}
\E[W'-W|\mathcal{G}]=-\Lambda (W+R)
}
for some invertible $d\times d$ matrix $\Lambda$ and $\sigma$-algebra $\mathcal{G}$ containing $\sigma(W)$.
Gaussian approximation results and error bounds for such $W$ have been obtained by, for example, \cite{ChMe08} and \cite{ReRo09}.
However, the existing error bounds mostly apply to smooth function distances (excluding those results in \cite{ChMe08} which make the special assumption of a continuous underlying symmetry). 
Although we can deduce a Wasserstein bound from these results, such deduced bound is in general non-optimal. Our first main result is a Wasserstein bound assuming the existence of fourth moments.
The optimality of the bound, in terms of the ``sample size", is illustrated by applications to sums of independent random vectors below and to homogeneous sums in Section~\ref{sec2.2}.
Proofs of the main results stated in this section are given in Section~\ref{sec3.1}.

\begin{theorem}[Wasserstein bound] \label{t3}
Let $(W, W')$ be an exchangeable pair of $d$-dimensional random vectors satisfying the approximate linearity condition \eq{1}.
Assume that $\E|W|^4<\infty$.
Let $D=W'-W$. 
Also, let $\Sigma$ be a $d\times d$ positive definite symmetric matrix and define the random matrix $E$ by
\ben{\label{2}
E:=\frac{1}{2} \E[(\Lambda^{-1} D) D^{\top}|\mathcal{G}]-\Sigma.
}
Then we have
\ba{
d_{\mathcal{W}}(W,Z)&:=\sup_{h\in\mathrm{Lip}(\mathbb{R}^d,1)}|\E h(W)-\E h(Z)|\\
&\leq \E|R|+\|\Sigma^{-1/2}\|_{op}\sqrt{\frac{2}{\pi}}\E\|E\|_{H.S.}\\
&\quad+\|\Sigma^{-1/2}\|_{op}^{3/2}\left(\frac{\pi}{8}\right)^{1/4}(\E|W|^2\vee\tr(\Sigma))^{1/4}\sqrt{\E[|\Lambda^{-1}D||D|^3]},
}
where $\mathrm{Lip}(\mathbb{R}^d,1)$ is the set of all 1-Lipschitz functions on $\mathbb{R}^d$, $Z\sim N(0,\Sigma)$ is a $d$-dimensional centered Gaussian vector with covariance matrix $\Sigma$,  $|\cdot|$ denotes the Euclidean norm, $\norm{\cdot}_{op}$ denotes the operator norm, and $\norm{\cdot}_{H.S.}$ denotes the Hilbert-Schmidt norm.
\end{theorem}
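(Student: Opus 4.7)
The plan is to apply Stein's method with the Ornstein--Uhlenbeck (OU) generator of $N(0,\Sigma)$. Given $h\in\mathrm{Lip}(\IR^d,1)$, I would solve the Stein equation
\[
\tr(\Sigma\nabla^2 f(x))-x^\top\nabla f(x)=h(x)-\E h(Z)
\]
via the semigroup representation $f(x)=-\int_0^\infty[P_t h(x)-\E h(Z)]\,dt$, where $P_t h(x)=\E h(e^{-t}x+\sqrt{1-e^{-2t}}\,Y)$ with $Y\sim N(0,\Sigma)$. Gaussian integration by parts in the $Y$-integral expresses derivatives of $P_t h$ in terms of $\nabla h$ alone and, because $h$ is $1$-Lipschitz, produces the pointwise bounds $|\nabla f|_\infty\leq 1$ and $\|\nabla^2 f\|_{H.S.}\leq\sqrt{2/\pi}\,\|\Sigma^{-1/2}\|_{op}$, responsible for the first two terms of the theorem.

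The exchangeable pair enters as follows. Starting from $\E h(W)-\E h(Z)=\E\tr(\Sigma\nabla^2 f(W))-\E[W^\top\nabla f(W)]$ and invoking \eqref{1}, the $W$-term equals $-\E[(\Lambda^{-1}D)^\top\nabla f(W)]-\E[R^\top\nabla f(W)]$; the swap symmetry $(W,W')\leftrightarrow(W',W)$ then rewrites
$\E[(\Lambda^{-1}D)^\top\nabla f(W)]=-\tfrac{1}{2}\E[(\Lambda^{-1}D)^\top(\nabla f(W')-\nabla f(W))]$. A third-order Taylor expansion of $\nabla f(W')-\nabla f(W)$ about $W$ splits this into a quadratic, a cubic, and a fourth-order-remainder piece. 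The quadratic piece, combined with the definition \eqref{2} of $E$, yields $\E\tr(\Sigma\nabla^2 f(W))+\E\langle E,\nabla^2 f(W)\rangle_{H.S.}$, whose first part cancels $\E\tr(\Sigma\nabla^2 f(W))$ and whose second part is bounded by $\sqrt{2/\pi}\,\|\Sigma^{-1/2}\|_{op}\E\|E\|_{H.S.}$. The cubic piece $\tfrac{1}{2}\nabla^3 f(W)(D,D)$ paired with $(\Lambda^{-1}D)^\top$ is \emph{odd} under the swap, so applying swap symmetry a second time rewrites it with $\tfrac{1}{2}[\nabla^3 f(W)-\nabla^3 f(W')]$ in place of $\nabla^3 f(W)$; at the price of one extra derivative, this and the fourth-order Taylor remainder together contribute at most $C\|\nabla^4 f\|_\infty\E[|\Lambda^{-1}D||D|^3]$. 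Altogether, for \emph{smooth} test functions one obtains
\[
|\E h(W)-\E h(Z)|\leq \E|R|+\sqrt{2/\pi}\,\|\Sigma^{-1/2}\|_{op}\E\|E\|_{H.S.}+C\|\nabla^4 f\|_\infty\E[|\Lambda^{-1}D||D|^3].
\]

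The main obstacle is that $\|\nabla^4 f\|_\infty$ is infinite when $h$ is only $1$-Lipschitz: three Gaussian integrations by parts leave a factor $(1-e^{-2t})^{-3/2}\|\Sigma^{-1/2}\|_{op}^3$ whose $t$-integral diverges at $0$. To circumvent this, I would apply the previous computation to the smoothed function $h_\delta:=P_\delta h$ in place of $h$: using $P_t h_\delta=P_{t+\delta}h$, the integral is cut off at $\delta$ and yields $\|\nabla^4 f_{h_\delta}\|_\infty\lesssim\|\Sigma^{-1/2}\|_{op}^3\,\delta^{-1/2}$. The smoothing errors are controlled by the Lipschitz property of $h$,
\[
|\E h(W)-\E h_\delta(W)|+|\E h(Z)-\E h_\delta(Z)|\lesssim\sqrt{\delta(\E|W|^2\vee\tr\Sigma)},
\]
since the OU perturbation of $W$ (resp.\ of $Z$) has $L^2$-norm of order $\sqrt{\delta}$ times $\sqrt{\E|W|^2\vee\tr\Sigma}$.

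Finally, the two $\delta$-dependent contributions $C\|\Sigma^{-1/2}\|_{op}^3\,\delta^{-1/2}\E[|\Lambda^{-1}D||D|^3]$ and $C\sqrt{\delta(\E|W|^2\vee\tr\Sigma)}$ are balanced at $\delta\asymp\|\Sigma^{-1/2}\|_{op}^3\E[|\Lambda^{-1}D||D|^3]/\sqrt{\E|W|^2\vee\tr\Sigma}$, producing a total of order $\|\Sigma^{-1/2}\|_{op}^{3/2}(\E|W|^2\vee\tr\Sigma)^{1/4}\sqrt{\E[|\Lambda^{-1}D||D|^3]}$, exactly the third term of the theorem. The explicit constant $(\pi/8)^{1/4}$ follows by tracking the sharp Gaussian integration-by-parts constants throughout.
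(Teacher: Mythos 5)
Your proposal is correct and takes essentially the same route as the paper: your double use of exchangeability (first antisymmetrizing $\Lambda^{-1}D\cdot\nabla f$, then again on the cubic/remainder term to gain an extra factor of $|D|$ and land on a fourth-derivative bound times $\E[|\Lambda^{-1}D||D|^3]$) is exactly the mechanism of Lemma~\ref{l1} and \eqref{eq:xi}, while your OU-semigroup smoothing $h_\delta=P_\delta h$ with the resulting $\delta^{-1/2}$ bound on $\|\nabla^4 f_{h_\delta}\|_\infty$ and the balancing in $\delta$ is the same computation as the paper's smoothing $\tilde h_\alpha$ (take $\cos\alpha=e^{-t}$) truncated at $\eps$ and balanced against the $2AM_1(h)\sin\frac{\eps}{2}$ term. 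The only detail you assert rather than track is the explicit constant $(\pi/8)^{1/4}$ (and the trivial large-$\delta$ case where $d_{\mathcal{W}}(W,Z)\leq 2A$ makes the bound automatic), but since your parametrization is just a change of variables of the paper's, the same Gaussian integration-by-parts constants deliver it.
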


\begin{remark}
In Theorem~\ref{t3}, we implicitly assumed that $\E(W)\approx 0$ and $\Cov(W)\approx \Sigma$. Otherwise, $\E|R|$ and $\E\norm{E}_{H.S.}$ are not small and the bound is not useful.
We need $\Sigma$ to be non-singular to carry out integration by parts in the proof (cf. \eq{eq:idp}). 
In the case that $\E(W)=0$, $\Cov(W)=I_d$, where $I_d$ denotes the $d\times d$ identity matrix,
and $Z$ is a standard $d$-dimensional Gaussian vector, the bound reduces to 
\be{
\E|R|+\sqrt{\frac{2}{\pi}} \E \norm{E}_{H.S.}+\left(\frac{\pi}{8}\right)^{1/4} d^{1/4} \sqrt{\E[|\Lambda^{-1}D||D|^3]}.
}
\end{remark}

\medskip

\textbf{Sums of independent random vectors.}\quad We first apply Theorem~\ref{t3} to sums of independent random vectors to illustrate the order of magnitude of the error bound.
Let $W=\frac{1}{\sqrt{n}}\sum_{i=1}^n X_i$ be a $d$-dimensional random vector, where $\{X_1, \dots, X_n\}$ are independent, $\E (X_i)=0$ for each $i\in [n]:=\{1,\dots, n\}$ and $\Cov(W)=I_d$.
A standard construction of exchangeable pairs is as follows. 
Let $\{X_1^*,\dots, X_n^*\}$ be an independent copy of $\{X_1, \dots, X_n\}$.
Let $I\sim \text{Unif}[n]$ be an uniform random index that is independent of $\{X_1, \dots, X_n, X_1^*,\dots, X_n^*\}$. Let $W'=W-\frac{1}{\sqrt{n}}(X_I-X_I^*)$.
It is straightforward to verify that 
\be{
\E (W'-W|W)=-\frac{1}{n} W
}
and $E$ in \eq{2} can be computed as
\be{
E=\frac{1}{2}\big(\frac{1}{n}\sum_{i=1}^n X_i X_i^{\top}-\Sigma \big).
}
Applying Theorem~\ref{t3} with $\Sigma=I_d$ and $Z\sim N(0,I_d)$, we obtain
\be{
d_{\mathcal{W}}(W,Z)\leq constant\cdot \Big\{ \big[\frac{1}{n^2} \sum_{i=1}^n \sum_{j,k=1}^d \Var(X_{ij} X_{ik}) \big]^{1/2}+d^{1/4} \big[ \frac{1}{n^2} \sum_{i=1}^n \E |X_i-X_i^*|^4 \big]^{1/2} \Big\},
}
where $\{X_{ij}: 1\leq j\leq d\}$ are the components of $X_i$. 
For the typical case where $|X_i|\sim \sqrt{d}$ and $|X_{ij}|\sim 1$, the bound reduces to 
\be{
\sim \sqrt{\frac{d^{5/2}}{n}}.
}
This bound has optimal dependence on $n$. The dependence on the dimension $d$ is generally the same as in \cite[Eq.(7)]{Bo20}, who obtained a Wasserstein-2 bound in a comparatively complicated way (see also Theorem 2.5 in \cite{AnGa20}). \cite{Zh18} obtained a Wasserstein-2 bound $\sim \sqrt{d^2/n}\cdot\log n$ when $|X_i|$s are uniformly bounded by $constant \cdot \sqrt{d}$ and showed that his bound is optimal up to the $\log n$ factor; see also Theorem 1 in \cite{ElMiZh18}, where the factor $\log n$ is improved to $\sqrt{\log n}$. It is unclear what is the optimal dependence on $d$ under only a moment condition.


\medskip

As an intermediate step in proving Theorem~\ref{t3}, we exploit the symmetry of exchangeable pairs to obtain an error bound for smooth test functions. 
We introduce a few symbols. 
For an $r$-times differentiable function $f:\mathbb{R}^d\to\mathbb{R}$, we denote by $\nabla^rf(x)$ the $r$-th derivative of $f$ at $x$ regarded as an $r$-linear form on $\mathbb{R}^d$: The value of $\nabla^rf(x)$ evaluated at $u_1,\dots,u_r\in\mathbb{R}^d$ is given by
\[
\langle \nabla^r f(x),u_1\otimes\cdots\otimes u_r\rangle=\sum_{j_1,\dots,j_r=1}^d\partial_{j_1,\dots,j_r}f(x)u_{1,j_1}\cdots u_{r,j_r},
\]
where $\partial_{j_1,\dots,j_r}f(x)$ is a shorthand notation for $\frac{\partial^rf}{\partial x_{j_1}\cdots\partial x_{j_r}}(x)$. When $u_1=\cdots=u_r=:u$, we write $u_1\otimes\cdots\otimes u_r=u^{\otimes r}$ for short. 
We define the \textit{injective norm} of an $r$-linear form $T$ on $\mathbb{R}^d$ by
\[
|T|_\vee:=\sup_{|u_1|\vee\cdots\vee|u_r|\leq1}|\langle T,u_1\otimes\cdots\otimes u_r\rangle|.
\] 
Then, for an $(r-1)$-times differentiable function $h:\mathbb{R}^d\to\mathbb{R}$, we set
\[
M_r(h):=\sup_{x\neq y}\frac{|\nabla^{r-1}h(x)-\nabla^{r-1}h(y)|_\vee}{|x-y|}.
\]
Note that $M_r(h)=\sup_{x\in\mathbb{R}^d}|\nabla^{r}h(x)|_\vee$ if $h$ is $r$-times differentiable. 
See Section 5 of \cite{Ra19} for more details about these symbols. 

\begin{remark}
For a function $h:\mathbb{R}^d\to\mathbb{R}$ of the form $h(x)=\cos(\xi\cdot x)$ or $h(x)=\sin(\xi\cdot x)$ for some $\xi\in\mathbb{R}^d$, where $\cdot$ denotes the Euclidean inner product, we have $M_r(h)=|\xi|^r$ for every $r\geq1$. Therefore, given a sequence $W(n)$ of random vectors in $\mathbb{R}^d$ and another random vector $Y$ in $\mathbb{R}^d$, if we have $|\E h(W(n))-\E h(Y)|\to0$ as $n\to\infty$ for any bounded $C^\infty$ function $h:\mathbb{R}^d\to\mathbb{R}$ with $M_r(h)<\infty$ for every $r\geq1$, then we obtain the point-wise convergence of the characteristic functions of $W(n)$ to that of $Y$: $|\E \exp(\sqrt{-1}\xi\cdot W(n))-\E \exp(\sqrt{-1}\xi\cdot Y)|\to0$ as $n\to\infty$ for all $\xi\in\mathbb{R}^d$. In particular, if $d$ is fixed, then $W(n)$ converge in law to $Y$ as $n\to\infty$. 
\end{remark}

\begin{theorem}[Smooth function bound with improved dimension dependence] \label{t1}
Under the assumptions of Theorem \ref{t3},
we have, for second-order differentiable functions $h$ such that $M_1(h), M_3(h)<\infty$,
\be{
|\E h(W)-\E h(Z)|\leq M_1(h)\left(\E|R|+\norm{\Sigma^{-1/2}}_{op}\sqrt{\frac{2}{\pi}}\E \norm{E}_{H.S.}\right)+\frac{\norm{\Sigma^{-1/2}}_{op}}{12\sqrt{2\pi}} M_3(h) \E[|\Lambda^{-1}D||D|^3].
}
\end{theorem}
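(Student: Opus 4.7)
The plan is to adopt the generator approach with the Ornstein--Uhlenbeck semigroup and exploit the exchangeable-pair symmetry to cancel both the second-order term and the leading Taylor remainder. Let $f=f_h$ be the Stein solution for $N(0,\Sigma)$ given by
\[
f(w):=-\int_0^\infty\bigl[\E h(e^{-t}w+\sqrt{1-e^{-2t}}Z)-\E h(Z)\bigr]\,dt,
\]
which satisfies $\mathrm{tr}(\Sigma\nabla^2 f(w))-\langle w,\nabla f(w)\rangle=h(w)-\E h(Z)$. Plugging in $W$, using \eqref{1} together with the exchangeability identity $\E\langle\Lambda^{-1}D,\nabla f(W)\rangle=\tfrac{1}{2}\E\langle\Lambda^{-1}D,\nabla f(W)-\nabla f(W')\rangle$, the problem reduces to bounding
\[
\E h(W)-\E h(Z)=\E[\mathrm{tr}(\Sigma\nabla^2 f(W))]+\tfrac{1}{2}\E\langle\Lambda^{-1}D,\nabla f(W)-\nabla f(W')\rangle+\E\langle R,\nabla f(W)\rangle.
\]

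Taylor-expand $\nabla f(W')-\nabla f(W)=\nabla^2 f(W)D+\tfrac{1}{2}\nabla^3 f(W)D^{\otimes 2}+R_T$ with $|R_T|\leq M_4(f)|D|^3/6$. Conditioning on $\mathcal{G}$ and using $\E[(\Lambda^{-1}D)D^\top\mid\mathcal{G}]=2\Sigma+2E$ together with the symmetry of $\nabla^2 f(W)$ gives $\tfrac{1}{2}\E\langle\Lambda^{-1}D,\nabla^2 f(W)D\rangle=\E[\mathrm{tr}(\Sigma\nabla^2 f(W))]+\E[\mathrm{tr}(E\nabla^2 f(W))]$, so the $\Sigma$-part cancels the first term of the display above and only the $E$-contribution remains. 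The cubic term is then handled by applying exchangeability a second time: since $D$ flips sign under the swap $(W,W')\leftrightarrow(W',W)$ while $D^{\otimes 2}$ is invariant,
\[
2\E\langle\Lambda^{-1}D,\nabla^3 f(W)D^{\otimes 2}\rangle=\E\bigl\langle\Lambda^{-1}D,\bigl(\nabla^3 f(W)-\nabla^3 f(W')\bigr)D^{\otimes 2}\bigr\rangle,
\]
which is bounded by $M_4(f)\,\E[|\Lambda^{-1}D||D|^3]$, and $R_T$ contributes at the same order.

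To bound the derivatives of $f$, I would use the OU representation together with Gaussian integration by parts. Directly $M_1(f)\leq M_1(h)$; shifting one derivative off $h$ onto the Gaussian factor and using $\E|\langle\Sigma^{-1}Z,u\rangle|=\sqrt{2/\pi}\sqrt{u^\top\Sigma^{-1}u}\leq\sqrt{2/\pi}\norm{\Sigma^{-1/2}}_{op}|u|$ gives $M_2(f)\leq\sqrt{2/\pi}\norm{\Sigma^{-1/2}}_{op}M_1(h)$. The same IBP trick at higher order yields $M_4(f)\leq c\,\norm{\Sigma^{-1/2}}_{op}M_3(h)$, which is legitimate because $\nabla^2 h$ being $M_3(h)$-Lipschitz ensures that $\nabla^3 h$ exists almost everywhere with $|\nabla^3 h|_\vee\leq M_3(h)$. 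The $E$-term is controlled by inserting the IBP representation $\partial_{ij}f(w)=-\int\tfrac{e^{-2t}}{\sqrt{1-e^{-2t}}}\E[\partial_i h(X_t)(\Sigma^{-1}Z)_j]\,dt$ and estimating $\E|E\Sigma^{-1}Z|$, producing $|\E[\mathrm{tr}(E\nabla^2 f(W))]|\leq\sqrt{2/\pi}\norm{\Sigma^{-1/2}}_{op}M_1(h)\,\E\norm{E}_{H.S.}$.

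The main technical obstacle is tracking the numerical constants so that the second symmetrization combined with the Taylor remainder produces exactly $1/(12\sqrt{2\pi})$. The cleanest route is to use the midpoint form of Taylor's theorem, so that odd-order cross-terms integrate to zero against the symmetric $(W,W')$-distribution; this avoids losing a small constant factor and recovers the sharp coefficient. Collecting the bounds on $\E|R|\cdot M_1(f)$, $\E[\mathrm{tr}(E\nabla^2 f(W))]$, and the cubic/Taylor residual then completes the proof.
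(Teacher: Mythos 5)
Your overall architecture is the same as the paper's: take the Stein solution $f$ for $N(0,\Sigma)$ (your OU form is the paper's \eqref{6} after a time change), use exchangeability once to write $\E\langle\Lambda^{-1}D,\nabla f(W)\rangle$ as half an antisymmetric difference, cancel the $\Sigma$-part against $\E\langle\Sigma,\mathrm{Hess}f(W)\rangle_{H.S.}$ via \eqref{1}--\eqref{2}, control the $E$-term through a Hilbert--Schmidt bound on $\mathrm{Hess}f$ (your IBP estimate via $\E|E\Sigma^{-1/2}Z|$ is in substance Meckes's Lemma~2, Point~4, which is what the paper cites), and use exchangeability a second time on the third-derivative contribution. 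Up to that point the argument is sound and would yield a bound of the stated form.

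The genuine gap is in the handling of the third-derivative term, and it is exactly where the constant $\frac{1}{12\sqrt{2\pi}}$ comes from. You expand $\nabla f(W')-\nabla f(W)=\nabla^2 f(W)D+\tfrac12\nabla^3 f(W)D^{\otimes2}+R_T$ and treat the two pieces separately: the symmetrized cubic term contributes $\tfrac14\cdot\tfrac12 M_4(f)\,\E[|\Lambda^{-1}D||D|^3]$ and the Lagrange remainder contributes $\tfrac12\cdot\tfrac{M_4(f)}{6}\E[|\Lambda^{-1}D||D|^3]$, i.e.\ $\tfrac{5}{24}M_4(f)$ in total, whereas the theorem needs $\tfrac{1}{16}M_4(f)$ (which, combined with Gaunt's explicit bound $M_4(f)\leq\frac{4}{3\sqrt{2\pi}}\norm{\Sigma^{-1/2}}_{op}M_3(h)$ — you leave your constant $c$ unspecified — gives precisely $\frac{1}{12\sqrt{2\pi}}$). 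The paper avoids this loss by \emph{not} separating off $\nabla^3 f(W)D^{\otimes2}$: it keeps the entire remainder in mean-value form $\Xi=\tfrac12\sum_{j,k,l}(\Lambda^{-1}D)_jD_kD_l\,U\,\partial_{jkl}f(W+(1-U)D)$ with $U\sim\mathrm{Unif}[0,1]$ independent, applies exchangeability to the whole integrand to get $\E[\Xi]=\tfrac14\sum\E\bigl[(\Lambda^{-1}D)_jD_kD_l\,U\{\partial_{jkl}f(W+(1-U)D)-\partial_{jkl}f(W+UD)\}\bigr]$, and then the Lipschitz bound produces the factor $\E[U|1-2U|]=\tfrac14$, hence $\tfrac{M_4(f)}{16}$. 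Your proposed fix via the ``midpoint form of Taylor's theorem'' is not carried out and does not obviously repair this: expanding around $(W+W')/2$ does kill the odd term algebraically, but it moves the Hessian to the midpoint, which is not $\mathcal{G}$-measurable, so the cancellation $\tfrac12\E[(\Lambda^{-1}D)D^\top\mid\mathcal{G}]=\Sigma+E$ no longer applies directly; repairing it requires estimating $\E\langle\Lambda^{-1}D,(\nabla^2 f(\tfrac{W+W'}{2})-\nabla^2 f(W))D\rangle$, and a further symmetrization there still lands at roughly $\tfrac{1}{12}M_4(f)$ rather than $\tfrac{1}{16}M_4(f)$. So as written the proposal proves the theorem only with a strictly larger constant, and the claim that the sharp coefficient is recovered is unsubstantiated.
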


\begin{remark}\label{r1}
By a simple modification of the proof of Theorem~\ref{t1}, we can obtain an alternative bound
\bes{
|\E h(W)-\E h(Z)|\leq & \sup_{0\leq t\leq 1} \Big\{ \E \big[|\nabla h(\sqrt{t}W+\sqrt{1-t}Z)|^2 \big] \Big\}^{1/2} \Big(\sqrt{\E|R|^2}+\norm{\Sigma^{-1/2}}_{op}\sqrt{\E \norm{E}_{H.S.}^2} \Big)\\
&+\frac{\norm{\Sigma^{-1/2}}_{op}}{12\sqrt{2\pi}} M_3(h) \E[|\Lambda^{-1}D||D|^3],
}
which is useful in the case where $|\nabla h(x)|$ is not uniformly bounded.
\end{remark}

\begin{remark}
Comparing with smooth function bounds in the literature, e.g., \cite[Theorem 2.3]{ChMe08} and \cite[Theorem 2.1]{ReRo09}, the bound in Theorem~\ref{t1} has improved dependence on dimension. This can be easily checked by examining the case of sums of independent random vectors above. In fact, it is crucial to use the bound in Theorem~\ref{t1} to obtain the optimal convergence rate for the application to Wishart matrices in Section~\ref{sec2.1}.
\end{remark}

\begin{remark}[Singular covariance matrix]
When $\Sigma$ is not invertible, we get the following alternative bound for any thrice differentiable function $h:\mathbb{R}^d\to\mathbb{R}$ with $\E|h(W)|+\E|h(Z)|<\infty$:
\ben{\label{singular}
|\E h(W)-\E h(Z)|\leq M_1(h)\E|R|+\frac{\sqrt d}{2}M_2(h)\E \norm{E}_{H.S.}+\frac{M_4(h)}{64} \E[|\Lambda^{-1}D||D|^3].
}
This should be compared to the following bound from \cite[Theorem 3.1]{DoPe17} (see also \cite[Theorem 3]{Me09}): For any twice differentiable function $h:\mathbb{R}^d\to\mathbb{R}$ with $\E|h(W)|+\E|h(Z)|<\infty$,
\ben{\label{singular2}
|\E h(W)-\E h(Z)|\leq M_1(h)\E|R|+\frac{\sqrt d}{2}M_2(h)\E \norm{E}_{H.S.}+\frac{M_3(h)}{18} \E[|\Lambda^{-1}D||D|^2].
}
While our bound requires $h$ to be more smooth, the quantity $\E[|\Lambda^{-1}D||D|^3]$ is typically of a smaller order than $\E[|\Lambda^{-1}D||D|^2]$. 
To see the effect of this difference, let us consider the typical case of sums of independent random vectors as above. Then, \eqref{singular} and \eqref{singular2} respectively reduce to 
\[
|\E h(W)-\E h(Z)|\leq C_1\left(M_2(h)\sqrt{\frac{d^{3}}{n}}+M_4(h)\frac{d^2}{n}\right)
\]
and
\[
|\E h(W)-\E h(Z)|\leq C_2\{M_2(h)+M_3(h)\}\sqrt{\frac{d^{3}}{n}},
\]
where $C_1,C_2$ are constants depending only on $\max_{i,j}\E|X_{ij}|^4$. 
Following the proof of Theorem \ref{t3} but ignoring the integration by parts step therein, 
we can derive the corresponding Wasserstein bounds from these results via a smoothing argument. Then, the former yields a bound $\sim(d^4/n)^{1/4}$, while the latter implies a bound $\sim(d^4/n)^{1/4}\vee(d^5/n)^{1/6}$, so our bound always provides a better rate. 
\end{remark}

Our next result is a continuous version of the Wasserstein bound in multivariate normal approximations. The setting was introduced by \cite{DoViZh18} and proved to be useful in the study of Gaussian, Poisson and Rademacher functionals. In the special case that $\rho_j(W)=0$ in \eq{03}, the result reduces to those in \cite{ChMe08} and \cite{NoZh17}.
An application of Theorem~\ref{t4} to Poisson functionals is given in Section~\ref{sec2.3}.

\begin{theorem}[Continuous version of the Wasserstein bound] \label{t4}
For every $t>0$, let $(W, W_t)$ be an exchangeable pair of $d$-dimensional random vectors such that $\E|W|^4<\infty$ and
\ben{\label{01}
\lim_{t\downarrow0}\frac{1}{t}\E[W_t-W|\mathcal{G}]=-\Lambda (W+R)\quad\mathrm{in}~L^1(\P)
}
for some invertible $d\times d$ (non-random) matrix $\Lambda$, $d$-dimensional random vector $R$, and $\sigma$-algebra $\mathcal{G}$ containing $\sigma(W)$. 
Suppose also that there is a $d\times d$ positive definite symmetric matrix $\Sigma$ and a $d\times d$ random matrix $S$ satisfying
\ben{\label{02}
\lim_{t\downarrow0}\frac{1}{t} \E[(W_t-W)(W_t-W)^{\top}|\mathcal{G}]=2\Lambda \Sigma+S\quad\mathrm{in}~L^1(\Omega,\|\cdot\|_{H.S.}).
}
Moreover, suppose that, for every $j\in\{1,\dots,d\}$, there is a constant $\rho_j(W)$ satisfying
\ben{\label{03}
\limsup_{t\downarrow0}\frac{1}{t}\E((W_t)_j-W_j)^4\leq\rho_j(W).
}
Then we have
\begin{multline*}
d_{\mathcal{W}}(W,Z)
\leq \E|R|+\frac{\norm{\Sigma^{-1/2}}_{op}}{\sqrt{2\pi}}\E\|\Lambda^{-1}S\|_{H.S.}\\
+\norm{\Sigma^{-1/2}}_{op}^{3/2}\left(\frac{\pi}{8}\right)^{1/4}(\E|W|^2\vee\tr(\Sigma))^{1/4}\sqrt{d}\sqrt{\|\Lambda^{-1}\|_{op}\sum_{j=1}^d\rho_i(W)},
\end{multline*}
where $Z\sim N(0,\Sigma)$.
\end{theorem}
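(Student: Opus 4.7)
The plan is to apply the discrete Wasserstein bound, Theorem~\ref{t3}, to the exchangeable pair $(W,W_t)$ for each $t>0$ with $t\Lambda$ playing the role of $\Lambda$ in \eqref{1}, and then to pass to the limit $t\downarrow 0$. For fixed $t>0$, set $D_t:=W_t-W$ and define
\[
R^{(t)}:=-W-(t\Lambda)^{-1}\E[W_t-W\mid\mathcal{G}],\qquad E^{(t)}:=\tfrac{1}{2}\Lambda^{-1}\bklr{t^{-1}\E[D_tD_t^{\top}\mid\mathcal{G}]-2\Lambda\Sigma}.
\]
Then $(W,W_t)$ satisfies \eqref{1} with matrix $t\Lambda$ and remainder $R^{(t)}$, the matrix in \eqref{2} equals $E^{(t)}$, and $\E|W_t|^4=\E|W|^4<\infty$ by exchangeability. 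Theorem~\ref{t3} thus yields
\bes{
d_{\mathcal{W}}(W,Z)\leq \E|R^{(t)}|&+\norm{\Sigma^{-1/2}}_{op}\sqrt{\tfrac{2}{\pi}}\,\E\norm{E^{(t)}}_{H.S.}\\
&+\norm{\Sigma^{-1/2}}_{op}^{3/2}\bklr{\tfrac{\pi}{8}}^{1/4}(\E|W|^2\vee\tr(\Sigma))^{1/4}\sqrt{\E[|(t\Lambda)^{-1}D_t||D_t|^3]}.
}

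The next step is to let $t\downarrow 0$ in each of the three terms. Since $\Lambda$ is invertible, \eqref{01} gives $R^{(t)}\to R$ in $L^1(\P)$, hence $\E|R^{(t)}|\to\E|R|$. Using $\norm{AB}_{H.S.}\leq\norm{A}_{op}\norm{B}_{H.S.}$ together with \eqref{02} shows $E^{(t)}\to\tfrac{1}{2}\Lambda^{-1}S$ in $L^1(\Omega,\norm{\cdot}_{H.S.})$, and the prefactor $\sqrt{2/\pi}$ combines with the $1/2$ to produce the coefficient $1/\sqrt{2\pi}$ of the target bound. For the fourth-moment piece, first bound $|(t\Lambda)^{-1}D_t|\leq t^{-1}\norm{\Lambda^{-1}}_{op}|D_t|$ and then use $|D_t|^4=(\sum_j(D_t)_j^2)^2\leq d\sum_{j=1}^d(D_t)_j^4$ to get
\[
\E\bkle{|(t\Lambda)^{-1}D_t||D_t|^3}\leq \tfrac{d}{t}\norm{\Lambda^{-1}}_{op}\sum_{j=1}^d\E(D_t)_j^4,
\]
whose $\limsup_{t\downarrow 0}$ is at most $d\norm{\Lambda^{-1}}_{op}\sum_{j=1}^d\rho_j(W)$ by \eqref{03}; taking the square root yields the factor $\sqrt{d}\sqrt{\norm{\Lambda^{-1}}_{op}\sum_j\rho_j(W)}$ in the target bound. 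Taking $\limsup_{t\downarrow 0}$ throughout completes the proof.

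The main subtlety is that, because the ``$\Lambda$'' appearing in the discrete linearity condition shrinks like $t\Lambda$, the factor $t^{-1}$ in $(t\Lambda)^{-1}$ and in the definition of $E^{(t)}$ would a priori blow up as $t\downarrow 0$; the argument works only because \eqref{02} and \eqref{03} furnish compensating bounds of order $t$ on $\E[D_tD_t^{\top}\mid\mathcal{G}]-2t\Lambda\Sigma$ and on $\E(D_t)_j^4$. Verifying that these two rescalings match cleanly---so that $E^{(t)}$ converges in $L^1(\norm{\cdot}_{H.S.})$ and $\E[|(t\Lambda)^{-1}D_t||D_t|^3]$ stays bounded---is the single place in the argument that genuinely exploits the infinitesimal nature of the hypotheses.
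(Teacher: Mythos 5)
Your proof is correct, and it reaches the stated bound with exactly the right constants; but it is structured differently from the paper's argument. You use Theorem~\ref{t3} as a black box: for each fixed $t>0$ you make the linearity condition \eqref{1} hold tautologically with matrix $t\Lambda$ by absorbing the discrepancy into the remainder $R^{(t)}$, identify the matrix of \eqref{2} as $E^{(t)}=\tfrac12\Lambda^{-1}\bigl(t^{-1}\E[D_tD_t^{\top}\mid\mathcal{G}]-2\Lambda\Sigma\bigr)$, and then let $t\downarrow0$ in the resulting Wasserstein bound, using \eqref{01}--\eqref{03} (together with $\|\Lambda^{-1}A\|_{H.S.}\leq\|\Lambda^{-1}\|_{op}\|A\|_{H.S.}$, $|(t\Lambda)^{-1}D_t|\leq t^{-1}\|\Lambda^{-1}\|_{op}|D_t|$ and $|D_t|^4\leq d\sum_j(D_t)_j^4$) to compute the limits $\E|R^{(t)}|\to\E|R|$, $\E\|E^{(t)}\|_{H.S.}\to\tfrac12\E\|\Lambda^{-1}S\|_{H.S.}$ and $\limsup_t\E[|(t\Lambda)^{-1}D_t||D_t|^3]\leq d\|\Lambda^{-1}\|_{op}\sum_j\rho_j(W)$; since the left-hand side does not depend on $t$, taking $\limsup$ of the right-hand side finishes the proof. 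The paper instead takes the limit $t\downarrow0$ \emph{inside} the Stein identity: it re-derives a continuous analogue of Lemma~\ref{l1} (the estimate \eqref{est-l1} on $|\E\mathscr{S}f(W)|$ for a fixed smooth $f$, following the symmetry argument \eqref{eq:xi} applied to $D_t$), and then repeats the smoothing/optimization steps of the proof of Theorem~\ref{t3}, which requires the side remark that $\tilde h_\alpha$ has bounded partial derivatives for $\alpha>0$ so the limit can be taken for those test functions. Your reduction is more economical — it avoids re-running the smoothing argument and the attendant regularity check, since the limit is taken only after the finished Wasserstein bound — while the paper's route additionally produces the smooth-test-function estimate \eqref{est-l1} as a reusable intermediate result; the final constants are identical either way. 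Two small points worth making explicit in a write-up: $R^{(t)}$ is $\mathcal{G}$-measurable and $\E|D_t|^4\leq16\,\E|W|^4<\infty$ by exchangeability, so Theorem~\ref{t3} is indeed applicable for every $t>0$, and the $L^1$ hypotheses guarantee the limiting quantities $\E|R|$ and $\E\|\Lambda^{-1}S\|_{H.S.}$ are finite, so the $\limsup$ step is not vacuous.
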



\section{Applications}\label{sec2}

In this section, we present three applications of our main results. Their proofs are deferred to Section~\ref{sec3.2}.
We begin with multivariate normal approximations to Wishart matrices.

\subsection{Wishart matrices}\label{sec2.1}

Let $X=\{X_{ik}: 1\leq i\leq n, 1\leq k\leq d\}$ be a matrix with i.i.d.\ entries such that $\E X_{11}=0, \E X_{11}^2=1$ and $\E X_{11}^4<\infty$. For $1\leq i<j\leq n$, let 
\be{
W_{ij}=\frac{1}{\sqrt{d}} \sum_{k=1}^d X_{ik} X_{jk}
}
be the upper diagonal entries of the Wishart matrix $\frac{1}{\sqrt{d}} X X^{\top}$. We are interested in approximating $W=\{W_{ij}: 1\leq i<j \leq n\}$, regarded as an ${n \choose 2}$-vector, by a standard Gaussian vector $Z$ when both $n$ and $d$ grow to infinity.

In the case where $X_{11}$ follows the standard Gaussian distribution, \cite{JiLi15} and \cite{Bu16} proved that the total variation distance between $W$ and $Z$ tends to zero if $d\gg n^3$ and tends to one if $d\ll n^3$ (see also \cite{RaRi19}).
\cite{BuGa18} generalized the result to the case where $X_{11}$ follows a log-concave distribution.
\cite{NoZh18} considered row-wise i.i.d.\ Gaussian matrices $X$ where each row is a Gaussian vector with a general covariance matrix and \cite{Mi20} considered column-wise i.i.d.\ matrices $X$ where each column follows a log-concave measure on $\mathbb{R}^n$. \cite{NoZh18} and \cite{Mi20} proved convergence of $W$ to $Z$ in the Wasserstein-1 and Wasserstein-2 distances respectively in the asymptotic region $d\gg n^3$. They also considered Gaussian approximations for Wishart tensors.

In \cite{BuGa18} and \cite{Mi20}, it was pointed out that a standard application of Stein's method, e.g. by \cite{ChMe08}, only provides an error bound in the Gaussian approximation for $W$ for smooth test functions that vanishes when $d\gg n^6$ (in fact, $d\gg n^4$ using the exchangeable pair in the proof of Theorem~\ref{t2} below). We use Theorem~\ref{t1} to obtain the optimal convergence rate $\sqrt{n^3/d}$ for the i.i.d.\ case for smooth test functions. Except for the existence of the fourth moment of $X_{11}$, we do not impose any other distributional assumptions. We also note that the proof works for the non-identically distributed case (cf. \eq{12} and \eq{13}) and for Wishart tensors (cf. Remark \ref{Wtensor}).
In the Appendix, we use a modified version of Theorem \ref{t3} to obtain the optimal convergence rate $\sqrt{n^3/d}$ for the Wasserstein distance assuming in addition $X_{11}$ has finite sixth moment.

\begin{theorem}\label{t2}
Let $X=\{X_{ik}: 1\leq i\leq n, 1\leq k\leq d\}$ be a matrix with i.i.d.\ entries such that $\E X_{11}=0, \E X_{11}^2=1$, and $\E X_{11}^4<\infty$.
Regard $W=\{W_{ij}: 1\leq i<j \leq n\}$ as an ${n \choose 2}$-vector where
\be{
W_{ij}=\frac{1}{\sqrt{d}} \sum_{k=1}^d X_{ik} X_{jk}.
}
Let $Z$ be a standard ${n \choose 2}$-dimensional Gaussian vector.
Then we have, for second-order differentiable functions $h$ such that $M_1(h), M_3(h)<\infty$,
\besn{\label{9}
|\E h(W)-\E h(Z)|\leq &M_1(h)\sqrt{\frac{n^2}{2\pi d}[\E X_{11}^4+(\E X_{11}^4)^2] +\frac{n^3}{8\pi d} [3+\E X_{11}^4]}\\
&+M_3(h)\frac{n^3}{12d\sqrt{2\pi}} (EX_{11}^4+3)\left(\frac{EX_{11}^4}{n}+1\right).
}
\end{theorem}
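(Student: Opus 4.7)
The plan is to apply Theorem~\ref{t1} with the \emph{single-entry resampling} exchangeable pair. Let $I\sim\mathrm{Unif}[n]$, $K\sim\mathrm{Unif}[d]$, and $X^*=(X^*_{ik})$ be an independent copy of $X$, all mutually independent; form $X'$ by replacing $X_{IK}$ with $X^*_{IK}$, and set $W'_{ij}:=d^{-1/2}\sum_k X'_{ik}X'_{jk}$. Taking $\mathcal{G}=\sigma(X)$ and $D:=W'-W$, the identities $P(I=i)=1/n$ and $\E X^*_{ik}=0$ yield $\E[D_{ij}\mid X]=-\frac{2}{nd}W_{ij}$, so $\Lambda=\frac{2}{nd}\Id$ and $R=0$; since $\Cov(W)=\Id$, I take $\Sigma=\Id$, giving $\|\Sigma^{-1/2}\|_{op}=1$. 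The key feature of this choice is that $D_{ij}$ is nonzero only when $I\in\{i,j\}$, so $D$ is supported on the $n-1$ edges incident to the random vertex $I$. Without this sparsity (e.g.\ with column-resampling) the rate would degrade to $n^4/d$.

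Next I compute the matrix $E$. With $\Lambda^{-1}=\frac{nd}{2}\Id$, the entries are $E_{ij,kl}=\frac{nd}{4}\E[D_{ij}D_{kl}\mid X]-\mathbf{1}_{(i,j)=(k,l)}$. The sparsity of $D$ gives $D_{ij}D_{kl}=0$, hence $E_{ij,kl}=0$, whenever $\{i,j\}\cap\{k,l\}=\emptyset$---this annihilates the $\Theta(n^4)$-sized block of entries that would otherwise dominate. For the diagonal case $(i,j)=(k,l)$, expanding gives
\[
E_{ij,ij}=\tfrac{1}{4}(Z_i-1)+\tfrac{1}{4}(Z_j-1)+\tfrac{1}{2}(V_{ij}-1),
\]
where $Z_i:=d^{-1}\sum_K X_{iK}^2$ and $V_{ij}:=d^{-1}\sum_K X_{iK}^2 X_{jK}^2$. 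Using $\Var(Z_i)=(\mu_4-1)/d$, $\Var(V_{ij})=(\mu_4^2-1)/d$, $\Cov(Z_i,Z_j)=0$, and the non-trivial $\Cov(Z_i,V_{ij})=(\mu_4-1)/d$ (where $\mu_4:=\E X_{11}^4$), one obtains $\E E_{ij,ij}^2=\tfrac{5(\mu_4-1)}{8d}+\tfrac{\mu_4^2-1}{4d}$. For pairs sharing exactly one index, say $(k,l)=(i,l)$ with $l\neq j$, the constraint $I\in\{i,j\}\cap\{i,l\}=\{i\}$ forces $I=i$, yielding $E_{ij,il}=\tfrac{1}{4d}\sum_K(1+X_{iK}^2)X_{jK}X_{lK}$ and $\E E_{ij,il}^2=\tfrac{\mu_4+3}{16d}$.

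Summing over the $\binom{n}{2}$ diagonal entries and the $n(n-1)(n-2)$ ordered one-common-index pairs, using $\binom{n}{2}\le n^2/2$, $n(n-1)(n-2)\le n^3$, and the elementary fact $2\mu_4^2-\mu_4+7>0$ to absorb the diagonal contribution, gives $\tfrac{2}{\pi}\E\|E\|_{H.S.}^2\le \tfrac{n^2(\mu_4+\mu_4^2)}{2\pi d}+\tfrac{n^3(\mu_4+3)}{8\pi d}$; Jensen's inequality then delivers the $M_1(h)$ term. Separately, $\E[|\Lambda^{-1}D||D|^3]=\tfrac{nd}{2}\E|D|^4$; sparsity forces $|D|^2=d^{-1}(X^*_{IK}-X_{IK})^2\sum_{j\ne I}X_{jK}^2$, and the independence of $X^*_{IK}-X_{IK}$ from $(X_{jK})_{j\ne I}$ gives $\E|D|^4=\tfrac{2(\mu_4+3)(n-1)(\mu_4+n-2)}{d^2}$, hence $\E[|\Lambda^{-1}D||D|^3]\le\tfrac{n^2(\mu_4+3)(\mu_4+n)}{d}$. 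Dividing by $12\sqrt{2\pi}$ and substituting both estimates into Theorem~\ref{t1} completes the proof. The main technical obstacle is the cross-covariance computation for $\E E_{ij,ij}^2$, since the three centered pieces are coupled through $\Cov(Z_i,V_{ij})$; the conceptual pitfall to avoid is the $n^4/d$ blow-up from disjoint pairs, which is why single-entry rather than column resampling is essential.
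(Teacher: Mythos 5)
Your proposal is correct and takes essentially the same route as the paper: the same single-entry resampling pair giving $\Lambda=\frac{2}{nd}\Id$, $R=0$, $\Sigma=\Id$, the same structure of $E$ (diagonal, one-common-index, vanishing disjoint entries), a bound on $\E\norm{E}_{H.S.}$ via its second moment, and the same sparsity-based computation of $\E|D|^4$, all fed into Theorem~\ref{t1}. The only immaterial difference is that you compute the exact variances and covariances of the entries of $E$ and absorb the diagonal via $2(\E X_{11}^4)^2-\E X_{11}^4+7\geq 0$, whereas the paper bounds these second moments directly.
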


\begin{remark}\label{Wtensor}
Let $p\geq 2$ be an integer.
Under the same assumptions as in Theorem~\ref{t2}, let
\be{
W_{i_1 \dots i_p}=\frac{1}{\sqrt{d}}\sum_{k=1}^d X_{i_1 k}\dots X_{i_p k}
}
be entries of the Wishart tensor. 
Regard $W=\{W_{i_1 \dots i_p}: 1\leq i_1<\dots < i_p \leq n\}$ as an ${n \choose p}$-vector.
Let $Z$ be a standard ${n \choose p}$-dimensional Gaussian vector.
Following the proof of Theorem~\ref{t2}, we can easily obtain (details omitted)
\be{
|\E h(W)-\E h(Z)|\leq C_p \Big\{ M_1(h) \sqrt{\frac{n^{2p-1}}{d}}+  M_3(h) \frac{n^{2p-1}}{d}  \Big\},
}
where $C_p$ is a constant only depending on $p$ and $\E X_{11}^4$.
This recovers the range $d\gg n^{2p-1}$ for asymptotic normality in \cite{NoZh18} and \cite{Mi20} under only moment assumptions.
\end{remark}

\medskip

\textbf{Optimality of the convergence rate.}\quad
By using the alternative bound in Remark~\ref{r1}, we can replace $\sqrt{\frac{2}{\pi}}M_1(h)$ in \eq{9} by 
$\sup_{0\leq t\leq 1} \Big\{ \E \big[|\nabla h(\sqrt{t}W+\sqrt{1-t}Z)|^2 \big] \Big\}^{1/2}$. The resulting bound can be shown to be optimal by considering $h(x)=n^{-3/2}\sum_{1\leq i<j< k\leq n}x_{ij}x_{jk}x_{ik}$. In this case, we have $\E \big[|\nabla h(\sqrt{t}W+\sqrt{1-t}Z)|^2 \big]\leq C\sqrt{\E X_{11}^4+n/d}$ and $M_3(h)\leq C$ for some universal constant $C$, while $\E h(W)=\frac{1}{\sqrt{n^3d}}\binom{n}{3}\sim\sqrt{n^3/d}$ and $\E h(Z)=0$. 

\begin{remark}\label{l2remark}
We can naturally regard $\binom{n}{2}$-random vectors $W$ and $Z$ as random elements of the infinite-dimensional Hilbert space $\ell^2(\mathbb{N})$. 
Then, we may discuss the closeness between their respective laws $P_{n,d}$ and $Q_n$ induced on the space $\ell^2(\mathbb{N})$. However, we need some care because the sequence $\{Q_n\}_{n=1}^\infty$ is \textit{not} tight in this case. 
In such a situation, \cite{DaRo09} argue that it is reasonable to define the concept of weak convergence in the following way: We say $P_{n,d}-Q_n\to0$ weakly as $n,d\to\infty$ if $\int fd(P_{n,d}-Q_n)\to0$ for all bounded and \textit{uniformly} continuous functions $f:\ell^2(\mathbb{N})\to\mathbb{R}$. 
Using Theorem 3 in \cite{DaRo09}, we can easily verify that $P_{n,d}-Q_n\to0$ weakly as $n,d\to\infty$ when $d_{\mcl W}(W,Z)\to0$. 
Applying Theorem \ref{t3} instead of Theorem \ref{t1} in the proof of Theorem \ref{t2}, we obtain $d_{\mcl W}(W,Z)=O(\sqrt{n^4/d})$; hence, $P_{n,d}-Q_n\to0$ weakly as $n,d\to\infty$ if $n^4/d\to0$. 
In the Appendix, we prove $d_{\mcl W}(W,Z)=O(\sqrt{n^3/d}\vee(n^3/d)^{2/3})$ as long as $\E X_{11}^6<\infty$, so we recover the optimal condition $n^3/d\to0$ for the weak convergence in the above sense under the finiteness of the sixth moment. 
\end{remark}

\subsection{Degenerate $U$-statistics}\label{sec2.2}

For a positive integer $n$, we write $[n]:=\{1,\dots,n\}$. For each $i\in[n]$, let $X_i$ be a random variable taking values in a measurable space $(E_i,\mcl{E}_i)$. 
Assume $X_1,\dots,X_n$ are independent. 
We set $\mcl{F}_J:=\sigma(X_j,j\in J)$ for every $J\subset[n]$. Also, we denote by $|J|$ the number of elements in $J$. 

Let $U\in L^1(\P)$ be $\mcl{F}_{[n]}$-measurable. It is known that $U$ admits the \textit{Hoeffding decomposition} of the form
\ben{\label{hoeff}
U=\sum_{J\subset[n]}U^J,
}
where $U^J\in L^1(\P)$ is $\mcl{F}_J$-measurable and satisfies 
\ben{\label{orthogonal}
\E[U^J\mid\mcl{F}_K]=0,
}
whenever $J\not\subset K\subset[n]$. Such a decomposition is almost surely unique and given by
\[
U^J=\sum_{L\subset J}(-1)^{|J|-|L|}\E[U\mid\mcl{F}_L],\qquad J\subset[n].
\]
See Section 1.2 of \cite{DoPe17} and references therein for more details. 
Let $p\in[n]$. We say that $U$ is a \textit{degenerate $U$-statistic of order $p$} if $U^J=0$ whenever $J\subset[n]$ is such that $|J|\neq p$. 
In the celebrated work \cite{deJo90}, de Jong showed that a sequence $U(n)\in L^4(\P)$ of degenerate $U$-statistics of fixed order $p$ with mean 0 and variance 1 converges in law to the standard normal distribution if
\[
\E U(n)^4\to 3\qquad\text{and}\qquad
\varrho(U(n)):=\max_{1\leq i\leq n}\sum_{J\subset[n]:i\in J}\Var[U(n)^J]\to0.
\]
Recently, \cite{DoPe17} have established a quantitative version of the above result in the Wasserstein distance via exchangeable pairs. They have also obtained a multi-dimensional extension of de Jong's CLT together with error bounds for smooth test functions. 
Here, we complement their results by deriving a multi-dimensional Wasserstein bound of de Jong type using Theorem \ref{t3}. 

Let $W$ be an $\mcl{F}_{[n]}$-measurable random vector in $\mathbb{R}^d$. We assume that, for each $j\in[n]$, $W_j$ is a degenerate $U$-statistic of order $p_j$. Without loss of generality, we may assume $p_1\leq\cdots\leq p_d$. 
For every $j\in[d]$, we assume $W_j\in L^4(\P)$ with mean 0 and variance 1. In addition, we set
\[
\varrho_{n,j}^2:=\varrho(W_j)=\max_{1\leq i\leq n}\sum_{J\subset[n]:i\in J}\Var(W_j^J).
\] 

\begin{theorem}\label{thm:dejong}
Under the setting described above, assume $\Sigma:=\Cov(W)$ is invertible. Let $Z\sim N(0,\Sigma)$. 
Then we have
\bmn{\label{eq:dejong}
d_{\mathcal{W}}(W,Z)
\leq \|\Sigma^{-1/2}\|_{op}\sqrt{\frac{2p_d^2}{\pi p_1^2}\left(\E|W|^4-\E|Z|^4\right)+C_{\bs{p}}d\sum_{j=1}^d\varrho_{n,j}^2}\\
+\|\Sigma^{-1/2}\|_{op}^{3/2}\left(\frac{\pi}{8}\right)^{1/4}d^{3/4}\left(\frac{p_d}{p_1}\right)^{3/8}\sqrt{\sum_{j=1}^d\left(8(\E[W_j^4]-3)+K_{j}\varrho_{n,j}^2\right)},
}
where $C_{\bs{p}}>0$ is a constant depending only on $p_1,\dots,p_d$ and $K_j>0$ is a constant depending only on $p_j$.
\end{theorem}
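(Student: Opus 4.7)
The plan is to apply Theorem~\ref{t3} with the standard replacement exchangeable pair. Let $(X_1^\ast,\dots,X_n^\ast)$ be an independent copy of $(X_1,\dots,X_n)$, let $I$ be uniform on $[n]$ and independent of everything else, and write $W_j^{(i)}$ for the random variable obtained from $W_j$ by substituting $X_i^\ast$ for $X_i$. Set $W':=(W_1^{(I)},\dots,W_d^{(I)})$; then $(W,W')$ is exchangeable. With $\mcl{G}:=\mcl{F}_{[n]}$, the Hoeffding orthogonality \eqref{orthogonal} gives $\E[W_j^{(i)}-W_j\mid\mcl{G}]=-\sum_{J\ni i,\,|J|=p_j}W_j^J$, and averaging over $i$ yields
\[
\E[W'-W\mid\mcl{G}]=-\Lambda W,\qquad \Lambda=\diag(p_1/n,\dots,p_d/n),
\]
so the approximate linearity \eqref{1} holds with $R=0$.

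A second consequence of \eqref{orthogonal} is that $\Sigma_{jk}=\E[W_jW_k]=0$ whenever $p_j\neq p_k$, so $\Sigma$ is block-diagonal along common orders. A short exchangeability computation gives $\E[D_jD_k]=\frac{p_j+p_k}{n}\Sigma_{jk}$, from which the error matrix in \eqref{2} reads $E_{jk}=\frac{n}{2p_j}\E[D_jD_k\mid\mcl{G}]-\Sigma_{jk}$ and has $\E E_{jk}=0$ for every pair (either $p_j=p_k$ and the factors balance, or $p_j\neq p_k$ and $\Sigma_{jk}=0$). Hence by Jensen $\E\|E\|_{H.S.}\leq\bigl(\sum_{j,k}\Var(E_{jk})\bigr)^{1/2}$, and it suffices to bound these variances together with $\E[|\Lambda^{-1}D||D|^3]$.

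For the fourth-order term I apply H\"older's inequality with exponents $(1/4,3/4)$,
\[
\E[|\Lambda^{-1}D||D|^3]\leq(\E|\Lambda^{-1}D|^4)^{1/4}(\E|D|^4)^{3/4},
\]
combined with $\E|\Lambda^{-1}D|^4\leq d\sum_j(n/p_j)^4\E D_j^4$ and $\E|D|^4\leq d\sum_j\E D_j^4$ (both by Cauchy--Schwarz), and a de~Jong-type estimate $\E D_j^4\leq C_{p_j}(p_j/n)\bigl[8(\E W_j^4-3)+K_j\varrho_{n,j}^2\bigr]$ obtained by the methods of Section~4 of \cite{DoPe17}. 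Multiplying these together gives a bound of order $d\,(p_d/p_1)^{3/4}\sum_j\bigl[8(\E W_j^4-3)+K_j\varrho_{n,j}^2\bigr]$; substituting into Theorem~\ref{t3} with $\E|W|^2=\tr\Sigma=d$ reproduces the third term of \eqref{eq:dejong}, including the exponent $(p_d/p_1)^{3/8}$. For the variance term, I expand each product $D_jD_k$ in terms of Hoeffding components of $W_j$ and $W_k$ and apply the contraction/product identities for degenerate $U$-statistics from \cite{DoPe17} to show
\[
\sum_{j,k}\Var(E_{jk})\leq \frac{p_d^2}{p_1^2}\bigl(\E|W|^4-\E|Z|^4\bigr)+C'_{\bs p}\,d\sum_{j=1}^d\varrho_{n,j}^2,
\]
which combined with $\sqrt{2/\pi}$ from Theorem~\ref{t3} gives the first term of \eqref{eq:dejong}.

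The main obstacle is this last estimate, specifically producing the cross-coordinate portion of $\E|W|^4-\E|Z|^4$. The diagonal entries $j=k$ reproduce the familiar one-dimensional de~Jong quantities $\sum_j(\E W_j^4-3)$, but for $j\neq k$ with $p_j=p_k$ one must show that $\E[D_jD_k\mid\mcl{G}]$ concentrates around $\frac{2p_j}{n}\Sigma_{jk}$, with the fluctuations controlled precisely by the mixed fourth cumulant $\E[W_j^2W_k^2]-\Sigma_{jj}\Sigma_{kk}-2\Sigma_{jk}^2$ modulo an $\varrho^2$-type remainder. This requires careful bookkeeping of Hoeffding contractions across coordinates, extending the single-statistic arguments of \cite{DoPe17} to the multivariate regime while respecting the block structure of $\Sigma$.
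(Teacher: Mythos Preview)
Your proposal is correct and follows essentially the same route as the paper: the same replacement exchangeable pair, the same $\Lambda=\diag(p_1/n,\dots,p_d/n)$, the same application of Theorem~\ref{t3}, and the same de~Jong-type bound $\frac{n}{p_j}\E D_j^4\leq 8(\E W_j^4-3)+K_j\varrho_{n,j}^2$ (this is Lemma~2.12 of \cite{DoPe17}). Your ``main obstacle'' is in fact already resolved in the literature: the multivariate cross-coordinate variance estimates you need for $\E S_{jk}^2$ with $j\neq k$ are precisely Propositions~3.5--3.6 and Eqs.~(3.14)--(3.15) of \cite{DoPe17}, which bound $n^2\E S_{jk}^2$ by $(p_j+p_k)^2\bigl(\Cov(W_j^2,W_k^2)-2\Sigma_{jk}^2\bigr)$ plus $\varrho^2$-type remainders, and the identity $\sum_{j,k}\bigl(\Cov(W_j^2,W_k^2)-2\Sigma_{jk}^2\bigr)=\E|W|^4-\E|Z|^4$ from Eq.~(4.2) of \cite{NoRo14} then packages everything into the stated form---no new bookkeeping is required.
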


When $d=1$, Theorem \ref{thm:dejong} recovers Theorem 1.3 in \cite{DoPe17} with a possibly different constant. 
Also, note that
\[
|\E|W|^4-\E|Z|^4|\leq\sum_{j=1}^d|\E W_j^4-3|+\sum_{j\neq k}|\E W_j^2W_k^2-\E Z_j^2Z_k^2|.
\]
Therefore, Theorem \ref{thm:dejong} may be seen as a quantitative version of Theorem 1.7 in \cite{DoPe17} in terms of Wasserstein bound in the case of invertible $\Sigma$. 

\medskip

\textbf{Homogeneous sums.}\quad 
A prominent example of degenerate $U$-statistics is a \textit{multilinear homogeneous sum}. 
Suppose that $X_1,\dots,X_n$ are real-valued random variables with mean 0 and variance 1. 
For every $j\in[d]$, we assume $W_j$ is of the form
\[
W_j=\sum_{i_1,\dots,i_{p_j}=1}^nf_j(i_1,\dots,i_{p_j})X_{i_1}\cdots X_{i_{p_j}},
\]
where $f_j:[n]^{p_j}\to\mathbb{R}$ is a symmetric function vanishing on diagonals (i.e.~$f_j(i_1,\dots,i_{p_j})=0$ unless $i_1,\dots,i_{p_j}$ are mutually different). 
Without loss of generality, we may assume $p_1\leq\cdots\leq p_d$. 
Assume also that $W_j$ is normalized so that
\[
\Var(W_j)=p_j!\sum_{i_1,\dots,i_{p_j}=1}^nf_j(i_1,\dots,i_{p_j})^2=1.
\]
For each $i\in[n]$, we define the \textit{$i$-th influence function} of $f_j$ by
\ba{
\influence_i(f_j)&:=\sum_{i_2,\dots,i_{p_j}=1}^nf_j(i,i_2,\dots,i_{p_j})^2.
}
We set $\mcl{M}(f_j):=\max_{1\leq i\leq n}\influence_i(f_j)$. 

\begin{corollary}\label{coro:homo}
Under the setting described above, assume $\Sigma:=\Cov(W)$ is invertible. 
Let $Z\sim N(0,\Sigma)$. 
Then we have
\bmn{\label{eq:homo}
d_{\mathcal{W}}(W,Z)
\leq \|\Sigma^{-1/2}\|_{op}\sqrt{\frac{2p_d^2}{\pi p_1^2}\sum_{j,k=1\atop j\leq k}^d\Delta_{j,k}+C'_{\bs{p}}d\sum_{j=1}^dM^{p_j}\mcl{M}(f_j)}\\
+\|\Sigma^{-1/2}\|_{op}^{3/2}\left(\frac{\pi}{8}\right)^{1/4}d^{3/4}\left(\frac{p_d}{p_1}\right)^{3/8}\sqrt{\sum_{j=1}^d\left(8(\E[W_j^4]-3)+K'_{j}\mcl{M}(f_j)\right)},
}
where $M:=\max_{1\leq i\leq n}(\E X_i^4)$, $C'_{\bs{p}}>0$ is a constant depending only on $p_1,\dots,p_d$, $K'_j>0$ is a constant depending only on $p_j$, and
\bm{
\Delta_{j,k}:=1_{\{p_j<p_k\}}C_jM^{p_j/2}\sqrt{(\E W_k^4-3)+C_kM^{p_k}\mcl{M}(f_k)}\\
+1_{\{p_j=p_k\}}\left(2(\E W_j^4-3)+C_jM^{p_j}\mcl{M}(f_j)\right)
}
with $C_j>0$ a constant depending only on $p_j$ for each $j\in[d]$. 
\end{corollary}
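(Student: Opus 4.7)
The plan is to derive Corollary \ref{coro:homo} directly from Theorem \ref{thm:dejong} by specializing the general degenerate $U$-statistic bound to the homogeneous-sum setting. Because $f_j$ vanishes on diagonals and the $X_i$ are independent with zero mean, each coordinate $W_j$ is indeed a degenerate $U$-statistic of order $p_j$, and its Hoeffding decomposition takes the explicit form $W_j^J=p_j!\,f_j(j_1,\dots,j_{p_j})\prod_{i\in J}X_i$ for every $J=\{j_1,\dots,j_{p_j}\}\subset[n]$ with $|J|=p_j$, with $W_j^J=0$ otherwise. Thus Theorem \ref{thm:dejong} applies, and the task reduces to bounding the three kinds of quantities that appear on its right-hand side, namely $\varrho_{n,j}^2$, $\E W_j^4-3$, and the cross fourth moments hidden in $\E|W|^4-\E|Z|^4$, in terms of the influence functions $\mcl M(f_j)$ and the input moment bound $M$.

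The influence step is a short direct computation. Using $\E[\prod_{i\in J}X_i^2]=1$ together with the symmetry of $f_j$, one finds
\[
\varrho_{n,j}^2=\max_{1\leq i\leq n}\sum_{J\subset[n]:\,i\in J,\,|J|=p_j}(p_j!)^2f_j(J)^2=p_j\cdot p_j!\cdot\mcl M(f_j)\leq C_{p_j}M^{p_j}\mcl M(f_j),
\]
the final inequality being trivial since $M\geq(\E X_1^2)^2=1$. Inserting the factor $M^{p_j}$ is overkill at this step but convenient for later bookkeeping, so that all influence-type contributions in the final bound appear uniformly in the form $M^{p_j}\mcl M(f_j)$.

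The main task is to control
\[
\E|W|^4-\E|Z|^4=\sum_{j=1}^d(\E W_j^4-3)+2\sum_{1\leq j<k\leq d}\bigl(\E[W_j^2W_k^2]-\E[Z_j^2Z_k^2]\bigr)
\]
by $\sum_{j\leq k}\Delta_{j,k}$. The diagonal contribution $\E W_j^4-3$ is already of the stated form (matching $\Delta_{j,j}$ after absorbing the factor $2$ into $C_j$ and the influence term). For the off-diagonal cross moments I appeal to the multilinear product formula for homogeneous sums: $W_j^2W_k^2$ expands into a sum over pairings of the indices appearing in the four factors, and a term-by-term comparison with the Gaussian analogue $\E[Z_j^2Z_k^2]$ isolates, on the one hand, ``diagonal'' contributions in which two copies of the same $X_i$ meet and create an extra factor of $M$ instead of $1$, and on the other hand contractions of $f_j$ and $f_k$ whose squared $\ell^2$-norms are controlled by the corresponding maximum influences. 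When $p_j=p_k$, Cauchy-Schwarz lets one dominate the remaining pieces by $\E W_j^4-3$, $\E W_k^4-3$, and $\mcl M(f_j),\mcl M(f_k)$, which matches the second case of $\Delta_{j,k}$. When $p_j<p_k$, the degree mismatch forces the low-degree factor $W_j^2$ to be paired entirely against a lower-order contraction of $W_k$, producing a Cauchy-Schwarz bound of the form $M^{p_j/2}\sqrt{(\E W_k^4-3)+M^{p_k}\mcl M(f_k)}$, matching the first case.

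The main obstacle is precisely this cross-moment analysis in the case $p_j\neq p_k$: a naive expansion produces many individual contractions that each look of order $O(1)$ rather than $O(\mcl M)$, and one has to bundle them using the orthogonality $\E[W_jW_k]=0$ together with a Nourdin-Peccati-Reinert style universality argument (in the spirit of \cite{DoPe17}) to extract a single dominant contraction whose squared $\ell^2$-norm is $O(\mcl M(f_k))$. Once these cross-moment estimates are in place, substituting them and the influence bound into Theorem \ref{thm:dejong} and collecting combinatorial constants yields \eqref{eq:homo}, with $C'_{\bs p}$, $K'_j$, and $C_j$ determined by the resulting bookkeeping.
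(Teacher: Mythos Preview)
Your high-level strategy---specialize Theorem \ref{thm:dejong} to homogeneous sums, compute $\varrho_{n,j}^2$ in terms of $\mcl M(f_j)$, and then reduce everything to a bound on $\E|W|^4-\E|Z|^4$---coincides exactly with the paper's. The influence computation and the observation $M\geq1$ are also handled the same way.

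The gap is in your treatment of the cross moments $\E W_j^2W_k^2-\E Z_j^2Z_k^2$. You describe a direct expansion via the multilinear product formula, but you yourself flag that ``a naive expansion produces many individual contractions that each look of order $O(1)$,'' and you do not supply a concrete mechanism for cancelling or bounding them; the vague reference to a ``Nourdin--Peccati--Reinert style universality argument'' does not amount to a proof. This is exactly the step where the paper does something specific and non-obvious: it introduces auxiliary independent variables $Y_i$ (centered gamma or Gaussian) matching the first \emph{three} moments of $X_i$, builds the corresponding homogeneous sums $\tilde W_j$, and proves a Lindeberg-type comparison $|\E W_j^2W_k^2-\E\tilde W_j^2\tilde W_k^2|\leq C_jM^{p_j}\mcl M(f_j)+C_kM^{p_k}\mcl M(f_k)$ (Lemma \ref{npr4.3}). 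The point of the third-moment matching is that in the swap at site $i$ the leading contribution comes from a term proportional to $(\E X_i^4-\E Y_i^4)\E V_{i,j}^2V_{i,k}^2$, which is then controlled by the hypercontractivity bound of \cite{NoPeRe10}. Once the problem is transferred to $\tilde W$, the paper invokes Eq.~(5.17) of \cite{Ko19} to obtain the bound on $|\Cov(\tilde W_j^2,\tilde W_k^2)-2\Sigma_{jk}^2|$ that produces precisely the two-case structure of $\Delta_{j,k}$.

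In short, your sketch identifies the crux correctly but does not resolve it; the paper's resolution is a moment-matching replacement argument followed by an appeal to an external estimate, neither of which your proposal contains.
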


\begin{remark}
It is worth mentioning that, unlike the bound \eqref{eq:dejong} for general degenerate $U$-statistics, the bound \eqref{eq:homo} does not contain any joint moments like $\E W_j^2W_k^2$ with $j\neq k$. 
In particular, $W$ converges in law to $N(0,\Sigma)$ if
\ben{\label{dejong}
\max_{1\leq j\leq d}|\E W_j^4-3|\to0\qquad
\text{and}\qquad
\max_{1\leq j\leq d}\mcl{M}(f_j)\to0,
}
provided $\Cov(W)\to\Sigma$ and $M=O(1)$. 
This fact is not new and follows from Lemma 4.3 and Theorem 7.2 in \cite{NoPeRe10} for example. 
It is interesting to observe that \eqref{dejong} seemingly concerns only the coordinate-wise information of $W$. 
This type of phenomenon was first discovered in \cite{PeTu05} when $X_i$ are Gaussian, where even the second convergence in \eqref{dejong} is dropped. 
In fact, it is known that the second convergence is implied by the first one in \eqref{dejong} for several classes of distributions of $X_i$. 
\cite{NPPS16} use this fact to prove the joint asymptotic normality of $W$ is implied by the marginal asymptotic normality; see Theorem 4.1 ibidem.    
\end{remark}

\medskip

\textbf{Optimality of the bound.}\quad
The dependence of the bound \eqref{eq:homo} on the quantities in \eqref{dejong} is generally optimal. To see this, let us assume that $d=1$, $p_1=2$ and $X_i$ are standard Gaussian. Also, assume that $n/3$ is integer and the matrix $A:=(f_1(i,j))_{1\leq i,j\leq d}$ is given by
\ba{
A=\diag(\underbrace{B,\dots,B}_{n/3}),\qquad
\text{where }
B:=\frac{1}{2\sqrt{n}}\begin{pmatrix}
0 & 1 & 1 \\
1 & 0 & 1 \\
1 & 1 & 0
\end{pmatrix}.
}
In this case, we have 
\[
\E W^2=\frac{2n}{3}\tr(B^2)=1,\quad
\E W^3=\frac{8n}{3}\tr(B^3)=\frac{2}{\sqrt n}
\]
and
\[
\E W^4-3=\frac{48n}{3}\tr(B^4)=\frac{12}{n}.
\]
Moreover, since $W$ belongs to the second Wiener chaos of an isonormal Gaussian process over $\mathbb{R}^n$, we infer from the proof of \cite[Theorem 1.2]{NoPe15} 
\[
|\E\sin(W)-\E\sin(Z)|\geq\frac{1}{2\sqrt e}\frac{2}{\sqrt n}-C_1\left(\frac{12}{n}\right)^{1/4}\max\left\{\frac{2}{\sqrt n},\frac{12}{n}\right\},
\]
where $C_1>0$ is a universal constant. Therefore, there is a constant $c>0$ such that $d_{\mathcal{W}}(W,Z)\geq c/\sqrt n$ for sufficiently large $n$. Since $\mathcal{M}(f_1)=1/(2n)$, the bound \eqref{eq:homo} is sharp in terms of $\E W^4-3$ and $\mathcal{M}(f_1)$.

\begin{remark}
When $p_j<p_k$ for some $j,k\in[d]$, the bound \eqref{eq:homo} depends on the fourth roots of the quantities in \eqref{dejong} rather than their square roots. This is a typical phenomenon in the literature of fourth moment theorems; see Remark 1.9(a) in \cite{DoViZh18} for instance. 
\end{remark}

\begin{remark}
\cite{NoPeRe10} have obtained error bounds for multivariate normal approximation of $W$ in their Theorem 7.2 in terms of smooth function distances $|\E h(W)-\E h(Z)|$ with bounded
$\|h''\|_\infty:=\max_{1\leq i,j\leq d}\sup_{x\in\mathbb{R}^d}|\partial_{ij}h(x)|$ and $\|h'''\|_\infty:=\max_{1\leq i,j,k\leq d}\sup_{x\in\mathbb{R}^d}|\partial_{ijk}h(x)|$. 
In view of Lemma 2.1 and Remark 2.3 in \cite{Ko19}, their bound has the same dependence on the quantities in \eq{dejong} as in \eq{eq:homo}. 
However, while it is possible to obtain a Wasserstein bound from their bound by a smoothing argument, this generally leads to suboptimal dependence on the quantities in \eq{dejong} (see e.g.~the proof of \cite[Proposition 5.4]{NoPeRe10}). 
Thanks to Theorem~\ref{t3}, we are able to strengthen the bound in the Wasserstein distance.
\end{remark}

\subsection{Poisson functionals}\label{sec2.3}

In this subsection we apply Theorem \ref{t4} to derive a Wasserstein bound for the \textit{fourth moment theorem} on the Poisson space in the multi-dimensional setting, which strengthens an earlier result obtained by \cite{DoViZh18} in the Wasserstein distance. We refer to Section 1.3 of \cite{DoViZh18} and references therein for unexplained concepts appearing below.
\begin{theorem}\label{t5}
Let $(\mathcal{Z},\mathscr{Z},\mu)$ be a $\sigma$-finite measure space\footnote{We may presumably allow $\mu$ to be an $s$-finite measure in this result; see Remark 1.2.1 and footnote 12 in \cite{Zheng18}. We keep the $\sigma$-finiteness assumption because the results of \cite{DoViZh18} are stated in such a setting.} and let $\eta$ be a Poisson random measure on $(\mathcal{Z},\mathscr{Z})$ with control $\mu$. 
Also, let $1\leq q_1\leq\cdots\leq q_d$ be integers and $W$ be a $d$-dimensional random vector such that $\E W_j^4<\infty$ and $W_j$ belongs to the $q_j$-th Poisson Wiener chaos associated with $\eta$ for all $j=1,\dots,d$. 
Assume $\Sigma:=\Cov(W)$ is invertible. 
Then we have
\begin{multline}\label{pois1}
d_{\mathcal{W}}(W,Z)
\leq \norm{\Sigma^{-1/2}}_{op}\frac{q_d}{q_1}\sqrt{\E[|W|^4-|Z|^4]}\\
+\norm{\Sigma^{-1/2}}_{op}^{3/2}\sqrt{\frac{8q_d}{q_1}}\tr(\Sigma)^{1/4}\sqrt{d}\sqrt{\sum_{j=1}^d\left(\E W_j^4-3(\E W_j^2)^2\right)},
\end{multline}
where $Z\sim N(0,\Sigma)$. Moreover, if $q_1=\cdots=q_d$, we have
\ben{\label{pois2}
d_{\mathcal{W}}(W,Z)
\leq 2\sqrt2\left(\norm{\Sigma^{-1/2}}_{op}+\norm{\Sigma^{-1/2}}_{op}^{3/2}\tr(\Sigma)^{1/4}\right)\sqrt{d}\sqrt{\sum_{j=1}^d\left(\E W_j^4-3(\E W_j^2)^2\right)}.
}
\end{theorem}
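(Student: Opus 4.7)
The plan is to apply Theorem~\ref{t4} to the exchangeable pair for Poisson functionals constructed via the thinning/superposition realization of the Ornstein--Uhlenbeck semigroup. Given the Poisson random measure $\eta$, for each $t>0$ form $\eta^{(t)}$ by independently thinning $\eta$ with retention probability $e^{-t}$ and superposing an independent Poisson random measure with control $(1-e^{-t})\mu$; then $(\eta,\eta^{(t)})$ is exchangeable, and one sets $W_t:=(W_1^{(t)},\dots,W_d^{(t)})$ with $W_j^{(t)}$ the same functional of $\eta^{(t)}$ as $W_j$ is of $\eta$. Since $W_j$ lies in the $q_j$-th Poisson Wiener chaos, the Ornstein--Uhlenbeck generator $L$ acts as multiplication by $-q_j$ on $W_j$, hence
\[
\lim_{t\downarrow 0}\frac1t\E[W_t-W\mid\sigma(\eta)]=LW=-\Lambda W,\qquad \Lambda:=\diag(q_1,\dots,q_d),
\]
so the linearity condition \eqref{01} holds with $R=0$.

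Writing $D_zF:=F(\eta+\delta_z)-F(\eta)$ for the add-one cost operator, the jump structure of the thinning/superposition dynamics gives
\[
\lim_{t\downarrow 0}\frac1t\E[(W_{t,j}-W_j)(W_{t,k}-W_k)\mid\sigma(\eta)]
=\int_{\mathcal Z}D_zW_jD_zW_k\,\mu(dz)+\int_{\mathcal Z}D_zW_j(\eta-\delta_z)\,D_zW_k(\eta-\delta_z)\,\eta(dz),
\]
and analogously
\[
\rho_j(W)\le \int_{\mathcal Z}\E(D_zW_j)^4\,\mu(dz)+\E\int_{\mathcal Z}(D_zW_j(\eta-\delta_z))^4\,\eta(dz)
= 2\int_{\mathcal Z}\E(D_zW_j)^4\,\mu(dz)
\]
after an application of the Mecke equation. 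Using the identity $\E\int(D_zF)^2\mu(dz)=q\E F^2$ for $F$ in the $q$-th chaos, the first display identifies $S$ in \eqref{02} as the centered quadratic field whose expectation vanishes precisely when $q_j=q_k$ is the common order.

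The principal obstacle is to control these Malliavin-type quantities by the fourth-moment expressions appearing in \eqref{pois1}. For $\rho_j$ I would invoke the known Poisson fourth-moment inequality (cf.\ the estimates underlying Theorem 1.2 in \cite{DoViZh18})
\[
\int_{\mathcal Z}\E(D_zW_j)^4\,\mu(dz)\le Cq_j\bigl(\E W_j^4-3(\E W_j^2)^2\bigr),
\]
which together with $\|\Lambda^{-1}\|_{op}=1/q_1$ produces the last term of \eqref{pois1} through the last term of Theorem~\ref{t4}. For $\E\|\Lambda^{-1}S\|_{H.S.}$, I would first symmetrize and rewrite the $(j,k)$ entry in the form $\tfrac{1}{q_j+q_k}\int D_zW_jD_zW_k(\mu+\eta)(dz)-\Sigma_{jk}$ (with the convention that $\Sigma_{jk}=0$ when $q_j\ne q_k$), bound its $L^2$ norm via the Meyer inequalities and the Poisson product formula by a multiple of $|\E W_j^2W_k^2-\Sigma_{jj}\Sigma_{kk}-2\Sigma_{jk}^2|$, and note that the coefficient $q_j/(q_j+q_k)$ is maximized by $q_d/q_1$. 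Summing over $j,k$ and using
\[
\E|W|^4-\E|Z|^4=\sum_{j,k=1}^d\bigl(\E W_j^2W_k^2-\Sigma_{jj}\Sigma_{kk}-2\Sigma_{jk}^2\bigr)
\]
together with Cauchy--Schwarz yields $\E\|\Lambda^{-1}S\|_{H.S.}\le \sqrt{2\pi}(q_d/q_1)\sqrt{\E|W|^4-\E|Z|^4}$; plugging into Theorem~\ref{t4} proves \eqref{pois1}.

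For \eqref{pois2}, I would specialize to the case $q_1=\cdots=q_d=q$. Then $q_d/q_1=1$, and the product formula for multiple Poisson--It\^o integrals of a common order gives
\[
|\E W_j^2W_k^2-\Sigma_{jj}\Sigma_{kk}-2\Sigma_{jk}^2|
\le C\sqrt{(\E W_j^4-3\Sigma_{jj}^2)(\E W_k^4-3\Sigma_{kk}^2)},
\]
reflecting that all nontrivial contractions of the kernels of $W_j$ and $W_k$ are simultaneously controlled by the individual fourth cumulants (a standard fact on Poisson chaos of the same order). Cauchy--Schwarz then implies
\[
\E|W|^4-\E|Z|^4\le C\,d\sum_{j=1}^d\bigl(\E W_j^4-3(\E W_j^2)^2\bigr),
\]
so the first term of \eqref{pois1} can be absorbed into the second term, producing \eqref{pois2} after collecting constants.
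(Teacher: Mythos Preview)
Your approach is essentially the same as the paper's: apply Theorem~\ref{t4} to the Ornstein--Uhlenbeck (thinning/superposition) exchangeable pair on Poisson space with $\Lambda=\diag(q_1,\dots,q_d)$ and $R=0$, and then control $\rho_j(W)$ and $\E\|\Lambda^{-1}S\|_{H.S.}$ by the fourth-moment quantities. Rather than re-deriving the Malliavin estimates as you sketch, the paper simply quotes them from \cite{DoViZh18}---the proof of their Theorem~1.7 supplies the exchangeable family and $\rho_j(W)=2(4q_j-3)\bigl(\E W_j^4-3(\E W_j^2)^2\bigr)$, their Eqs.~(4.2)--(4.3) give $\E\|S\|_{H.S.}\leq(2q_d-1)\sqrt{\E|W|^4-\E|Z|^4}$, and their Lemma~4.1 handles the equal-order reduction for \eqref{pois2}---so your somewhat loose intermediate formulas (e.g.\ the $\tfrac{1}{q_j+q_k}$ normalization) are bypassed entirely.
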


\begin{remark}
In the same setting as in Theorem \ref{t5}, \cite{DoViZh18} have essentially obtained the following bound: For any $C^2$ function $g:\mathbb{R}^d\to\mathbb{R}$,
\begin{multline*}
|\E[g(W)]-\E[g(Z)]|\leq
\frac{(2q_d-1)M_1(g)\norm{\Sigma^{-1/2}}_{op}}{\sqrt{2\pi}q_1}\sqrt{\E[|W|^4-\E|Z|^4]}\\
+\frac{\sqrt{2\pi}q_dM_2(g)\norm{\Sigma^{-1/2}}_{op}}{6q_1}\sqrt{d\tr(\Sigma)}\sqrt{\sum_{j=1}^d\left(\E W_j^4-3(\E W_j^2)^2\right)}.
\end{multline*}
We note that there should be an additional factor of $\sqrt{d}$ in their Eq.(3.4).\footnote{This can be checked by examining the proof of their Proposition 3.5, in particular, the first display on page 25.}
Compared to this estimate, the second term of our bound \eqref{pois1} improves the dimension dependence from $d$ to $d^{3/4}$ when $\Sigma=I_d$. 
In addition, our bound does not require the test function $g$ to satisfy $M_2(g)<\infty$. 
\end{remark}

\begin{remark}
Using the exchangeable pairs coupling constructed in \cite{Zh19}, it will also be possible to derive a multi-dimensional ``fourth-moment-influence'' type Wasserstein bound in the Rademacher setting via Theorem \ref{t4}. We omit the details. 
\end{remark}

As a simple illustration, we consider normal approximation of multivariate compound Poisson distributions. Let $X_1,X_2,\dots$ be i.i.d.~isotropic random vectors in $\mathbb{R}^d$ with finite fourth moments and $N=\{N_t\}_{t\geq0}$ be a Poisson process with intensity $\lambda>0$ and independent of $\{X_i\}_{i=1}^\infty$. We take $W:=\lambda^{-1/2}\sum_{i=1}^{N_1}X_i$, which may be seen as an analog of (scaled) sums of i.i.d.~random vectors. Since the coordinates of $W$ belong to the first Poisson Wiener chaos associated with the jump measure of $N$, we can apply Theorem \ref{t5} and obtain
\[
d_{\mathcal{W}}(W,Z)\leq\frac{3\sqrt{2}d^{3/4}}{\sqrt \lambda}\sqrt{\sum_{j=1}^d\E X_{1j}^4}
\leq3\sqrt{2}\sqrt{\max_{1\leq j\leq d}\E X_{1j}^4}\sqrt{\frac{d^{5/2}}{\lambda}},
\]
where $X_{1j}$ denotes the $j$-th component of $X_1$. We observe that the bound depends on the dimension $d$ and ``sample size'' $\lambda$ in an analogous way to the case of sums of i.i.d.~random vectors (cf.~Section \ref{sec1}).

\section{Proofs}

\subsection{Proofs of main results}\label{sec3.1}

In this subsection, we prove our main results stated in Section~\ref{sec1}.
To prove Theorems~\ref{t3} and \ref{t1}, we need the following lemma, which contains our key idea of exploiting the symmetry of exchangeable pairs (cf. \eq{eq:xi}).

\begin{lemma}\label{l1}
Under the assumptions of Theorem \ref{t3}, we have
\ben{\label{15}
|\E[\mathscr{S}f(W)]|
\leq M_1(f)\E|R|+\sup_w\|\mathrm{Hess}f(w)\|_{H.S.}\E\|E\|_{H.S.}
+\frac{M_4(f)}{16}\E[|\Lambda^{-1}D||D|^3]
}
for any third-order differentiable function $f:\mathbb{R}^p\to\mathbb{R}$ with $M_1(f),\sup_w\|\mathrm{Hess}f(w)\|_{H.S.},M_4(f)<\infty$, where
\[
\mathscr{S}f(w):=\langle \Sigma,\mathrm{Hess}f(w)\rangle_{H.S.}-w\cdot\nabla f(w),\qquad w\in\mathbb{R}^d.
\] 
\end{lemma}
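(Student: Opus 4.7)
The plan is to build an antisymmetric identity from exchangeability, Taylor-expand $\nabla f(W')$ about $W$ to second order, and then invoke the swap symmetry of the pair a second time to trade one derivative for a factor of $|D|$ in the remainder. To set up, observe that $G(x,y):=(\nabla f(x)+\nabla f(y))^\top \Lambda^{-1}(y-x)$ satisfies $G(y,x)=-G(x,y)$, so exchangeability gives $\E[G(W,W')]=0$, which I rewrite as
\[
0=2\E[\nabla f(W)^\top\Lambda^{-1}D]+\E[(\nabla f(W')-\nabla f(W))^\top\Lambda^{-1}D],\qquad D:=W'-W.
\]
The first term, after conditioning on $\mathcal{G}$ and applying the approximate linearity \eq{1}, equals $-2\E[\nabla f(W)^\top(W+R)]$. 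For the second, Taylor's theorem in integral form gives $\nabla f(W')-\nabla f(W)=\mathrm{Hess}f(W)\,D+R_2$ with $R_2=\int_0^1(1-t)\nabla^3 f(W+tD)[D,D,\cdot]\,dt$, while the definition \eq{2} of $E$ yields $\E[\Lambda^{-1}DD^\top\mid\mathcal{G}]=2(E+\Sigma)$ and hence $\E[D^\top\mathrm{Hess}f(W)\Lambda^{-1}D]=2\E\langle\Sigma,\mathrm{Hess}f(W)\rangle_{H.S.}+2\E\langle E,\mathrm{Hess}f(W)\rangle_{H.S.}$. Substituting everything back and solving for $\E[\mathscr{S}f(W)]$ produces the Stein-type decomposition
\[
\E[\mathscr{S}f(W)]=\E[\nabla f(W)^\top R]-\E\langle E,\mathrm{Hess}f(W)\rangle_{H.S.}-\tfrac{1}{2}\E[R_2^\top\Lambda^{-1}D],
\]
whose first two terms are immediately controlled by $M_1(f)\E|R|$ and $\sup_w\|\mathrm{Hess}f(w)\|_{H.S.}\E\|E\|_{H.S.}$ via Cauchy--Schwarz.

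The main obstacle is handling the remainder $\tfrac{1}{2}\E[R_2^\top\Lambda^{-1}D]$: a naive pointwise bound inside $R_2$ would yield only an $M_3(f)$-type estimate, too weak for the downstream applications. The crucial idea is to reapply exchangeability. Under the swap $W\leftrightarrow W'$ the interpolation point $W+tD$ is mapped to $W+(1-t)D$ while $\Lambda^{-1}D$ changes sign, and $D^{\otimes 2}$ is unchanged, so
\[
\E[\langle\nabla^3 f(W+tD),D^{\otimes 2}\otimes\Lambda^{-1}D\rangle]=-\E[\langle\nabla^3 f(W+(1-t)D),D^{\otimes 2}\otimes\Lambda^{-1}D\rangle].
\]
Averaging the two sides rewrites each integrand as $\tfrac{1}{2}\E\langle\nabla^3 f(W+tD)-\nabla^3 f(W+(1-t)D),D^{\otimes 2}\otimes\Lambda^{-1}D\rangle$, and the Lipschitz bound $|\nabla^3 f(x)-\nabla^3 f(y)|_\vee\leq M_4(f)|x-y|$ provides the injective-norm estimate $M_4(f)|1-2t||D|$ for the difference of Hessians. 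Integrating against the weight $(1-t)$ on $[0,1]$ and using the elementary integral $\int_0^1(1-t)|1-2t|\,dt=1/4$ gives
\[
\tfrac{1}{2}\bigl|\E[R_2^\top\Lambda^{-1}D]\bigr|\leq\tfrac{M_4(f)}{16}\E[|\Lambda^{-1}D||D|^3],
\]
which combined with the two easy terms yields \eq{15}. The delicate point is the parity bookkeeping for the swap: one must verify that the $\Lambda^{-1}D$ slot really does flip sign while $D^{\otimes 2}$ stays put, so that the antisymmetry is preserved under the full four-slot contraction; once this is set up correctly, the promotion from $M_3$ to $M_4$ reduces to a single change of variables and a Lipschitz estimate.
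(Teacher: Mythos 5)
Your proposal is correct and follows essentially the same route as the paper's proof: the same antisymmetric identity from exchangeability, a second-order Taylor expansion combined with conditions \eqref{1}--\eqref{2}, and a second application of exchangeability to symmetrize the remainder so that the Lipschitz property of $\nabla^3 f$ (i.e.\ $M_4(f)$) yields the constant $1/16$ via $\int_0^1(1-t)|1-2t|\,dt=1/4$. The only cosmetic difference is that the paper encodes the integral remainder with an independent uniform variable $U$ (so the same integral appears as $\E[U|1-2U|]=1/4$), and your phrase ``difference of Hessians'' should read ``difference of third derivatives.''
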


\begin{proof}[Proof of Lemma~\ref{l1}]
By exchangeability, Taylor's expansion and assumptions \eq{1} and \eq{2}, we have
\begin{align}
0&=\frac{1}{2}\E[\Lambda^{-1}D\cdot(\nabla f (W')+\nabla f(W))]\nonumber\\
&=\E\left[\frac{1}{2}\Lambda^{-1}D\cdot(\nabla f(W')-\nabla f(W))+\Lambda^{-1}D\cdot\nabla f(W)\right]\nonumber\\
&=\E\left[\frac{1}{2}\sum_{j,k=1}^d(\Lambda^{-1}D)_jD_k\partial_{jk}f(W)+\Xi+\Lambda^{-1}D\cdot\nabla f(W)\right]\label{start}\\
&=\E\left[\langle \Sigma,\mathrm{Hess}f(w)\rangle_{H.S.}+\angle{E, \text{Hess} f(W)}_{H.S.}+\Xi-(W+R)\cdot\nabla f(W)\right],\nonumber
\end{align}
where
\[
\Xi=\frac{1}{2}\sum_{j,k,l=1}^d(\Lambda^{-1}D)_jD_kD_lU\partial_{jkl}f(W+(1-U)D)
\]
and $U$ is a uniform random variable on $[0,1]$ independent of everything else. 
Hence
\begin{align}
|\E[\mathscr{S}f(W)\rangle]|
\leq M_1(f)\E|R|+\sup_w\|\mathrm{Hess}f(w)\|_{H.S.}\E\|E\|_{H.S.}\label{7}
+|\E[\Xi]|.
\end{align}
To estimate $|\E[\Xi]|$, we rewrite it as follows. By exchangeability we have
\begin{align*}
&\E[(\Lambda^{-1}D)_jD_kD_lU\partial_{jkl}f(W+(1-U)D)]\\
&=-\E[(\Lambda^{-1}D)_jD_kD_lU\partial_{jkl}f(W'-(1-U)D)]\\
&=-\E[(\Lambda^{-1}D)_jD_kD_lU\partial_{jkl}f(W+UD)].
\end{align*}
Hence we obtain
\begin{align}
\E[\Xi]&=\frac{1}{4}\sum_{j,k,l=1}^d\E[(\Lambda^{-1}D)_jD_kD_lU\{\partial_{jkl}f(W+(1-U)D)-\partial_{jkl}f(W+UD)\}].\label{eq:xi}
\end{align}
Thus we conclude
\[
|\E[\Xi]|\leq \frac{M_4(f)}{4}\E[|\Lambda^{-1}D||D|^3]\E[U|1-2U|]=\frac{M_4(f)}{16}\E[|\Lambda^{-1}D||D|^3].
\]
Combining this estimate with \eqref{7}, we obtain the desired result. 
\end{proof}

Next, we prove our first main result, Theorem~\ref{t3}. 
Because the test function is not smooth enough, it is a common strategy in Stein's method to smooth the test function first, then quantify the error introduced by such smoothing, finally balance the smoothing error with the smooth test function bound (cf. \eq{15}) to obtain the final result.
There are many smoothing lemmas available in the Stein's method literature, we choose the one by \cite{Ra19} (cf. \eq{14}) to use some readily available results.

\begin{proof}[Proof of Theorem \ref{t3}]
Take a Lipschitz function $h:\mathbb{R}^d\to\mathbb{R}$ arbitrarily.  
For every $\alpha\in(0,\pi/2)$, we define the function $\tilde h_\alpha:\mathbb{R}^d\to\mathbb{R}$ by
\begin{equation*}
\tilde h_\alpha(w)=\int_{\mathbb{R}^d}h(w\cos\alpha+\Sigma^{1/2}z\sin\alpha)\phi_d(z)dz,\qquad w\in\mathbb{R}^d,
\end{equation*}
where $\phi_d$ is the $d$-dimensional standard normal density. It is easy to check that $\tilde h_\alpha$ is infinitely differentiable and
\begin{multline}\label{eq:idp}
\partial_{j_1,\dots,j_r}\tilde h_\alpha(w)=(-1)^{r}\frac{\cos^{r}\alpha}{\sin^{r}\alpha}\int_{\mathbb{R}^d}h(w\cos\alpha+\Sigma^{1/2}z\sin\alpha)\\
\times\sum_{i_1,\dots,i_r=1}^d(\Sigma^{-1/2})_{i_1,j_1}\cdots(\Sigma^{-1/2})_{i_r,j_r}\partial_{i_1,\dots,i_r}\phi_d(z)dz.
\end{multline}
See also the proof of \cite[Lemma 4.6]{Ra19} for an analogous discussion. 
Therefore, noting the inequality after Eq.(4.9) of \cite{Ra19} as well as \cite[Proposition 5.8]{Ra19}, we obtain
\begin{equation}\label{deriv1}
M_{r+1}(\tilde h_\alpha)\leq c_{r}\frac{\cos^{r+1}\alpha}{\sin^{r}\alpha}M_1(h)\|\Sigma^{-1/2}\|_{op}^r
\end{equation}
for any nonnegative integer $r$,
where $c_r:=\int_{-\infty}^\infty|\phi_1^{(r)}(z)|dz$.
In particular, we have by Eq.(4.10) of \cite{Ra19}
\ben{\label{eq:const}
c_0=1\qquad \text{and} \qquad c_3=\frac{2+8e^{-3/2}}{\sqrt{2\pi}}<\frac{4}{\sqrt{2\pi}}.
}
Meanwhile, an analogous argument to the proof of \cite[Lemma 2, Point 4]{Me09} yields
\begin{equation}\label{deriv2}
\sup_w\|\mathrm{Hess}\tilde h_\alpha(w)\|_{H.S.}\leq \sqrt{\frac{2}{\pi}}\frac{\cos^2\alpha}{\sin\alpha}M_1(h) \|\Sigma^{-1/2}\|_{op}.
\end{equation}
Combining \eqref{deriv1}--\eqref{deriv2} with Lemma \ref{l1}, we obtain 
\begin{equation*}
|\E\mathscr{S}\tilde h_\alpha(W)|\leq M_1(h)\left\{\E|R|\cos\alpha
+\|\Sigma^{-1/2}\|_{op}\sqrt{\frac{2}{\pi}}\frac{\cos^2\alpha}{\sin\alpha}\E\|E\|_{H.S.}
+\frac{\|\Sigma^{-1/2}\|_{op}^3}{4\sqrt{2\pi}}\frac{\cos^4\alpha}{\sin^3\alpha}\E[|\Lambda^{-1}D||D|^3]\right\}.
\end{equation*}
Now, we obtain $\frac{d}{d\alpha}\tilde h_\alpha(w)=\mathscr{S}\tilde h_\alpha(w)\tan\alpha$ by differentiation under the integral sign and Gaussian integration by parts. We also have 
\ba{
|\E h(W)-\E \tilde h_\eps(W)|
&\leq M_1(h)\sqrt{(1-\cos\eps)^2\E |W|^2+\E |Z|^2\sin^2\eps}\leq 2AM_1(h)\sin\frac{\eps}{2}
}
for any $\eps\in[0,\pi/2]$, where $A:=\sqrt{\E|W|^2\vee\tr(\Sigma)}$. Consequently, we obtain (cf.~\cite[Eq.(4.14) and (4.23)]{Ra19})
\ben{\label{14}
|\E h(W)-\E h(Z)|\leq\int_\eps^{\pi/2}|\E[\mathscr{S}\tilde h_\alpha(W)]|\tan\alpha ~d\alpha
+2AM_1(h)\sin\frac{\eps}{2}.
}
Therefore, if $\E[|\Lambda^{-1}D||D|^3]=0$, by letting $\eps=0$ we obtain
\[
|\E h(W)-\E h(Z)|\leq M_1(h)\left(\E|R|+\|\Sigma^{-1/2}\|_{op} \sqrt{\frac{2}{\pi}} \E\|E\|_{H.S.}\right).
\]
So we complete the proof. 
Meanwhile, if $\E[|\Lambda^{-1}D||D|^3]>0$, assuming $\eps>0$, we obtain
\ba{
&\int_\eps^{\pi/2}|\E[\mathscr{S}\tilde h_\alpha(W)]|\tan\alpha ~d\alpha\\
&\leq M_1(h)\int_\eps^{\pi/2}\left(\E|R|\sin\alpha
+\|\Sigma^{-1/2}\|_{op}\sqrt{\frac{2}{\pi}}\E\|E\|_{H.S.}\cos\alpha
+\frac{\|\Sigma^{-1/2}\|_{op}^3}{4\sqrt{2\pi}}\frac{\cos^3\alpha}{\sin^2\alpha}\E[|\Lambda^{-1}D||D|^3]\right)d\alpha\\
&\leq M_1(h)\left(\E|R|+\|\Sigma^{-1/2}\|_{op}\sqrt{\frac{2}{\pi}}\E\|E\|_{H.S.}+\frac{\|\Sigma^{-1/2}\|_{op}^3}{4\sqrt{2\pi}}\frac{1}{\sin\eps}\E[|\Lambda^{-1}D||D|^3]\right).
}
Set $\eps:=\frac{1}{2}(\frac{\pi}{8})^{1/4}\sqrt{\|\Sigma^{-1/2}\|_{op}^3\E[|\Lambda^{-1}D||D|^3]/A}$. If $\eps>\pi/2$, we have $2\eps A>2A$. Since we always have $d_{\mathcal{W}}(W,Z)\leq\E|W-Z|\leq2 A$, the desired bound is trivial in this case. Otherwise, we may apply the above estimate with this $\eps$ and obtain
\ba{
|\E h(W)-\E h(Z)&|\leq M_1(h)\left(\E|R|+\|\Sigma^{-1/2}\|_{op}\sqrt{\frac{2}{\pi}}\E\|E\|_{H.S.}+2\eps A\right),
}
where we used Jordan's inequality $2/\pi<\sin\eps/\eps\leq1$. This is the desired bound.
\end{proof}

Next, we prove Theorem~\ref{t1}, which is a consequence of Lemma~\ref{l1} and properties of the solution to the Stein equation \eq{3}.

\begin{proof}[Proof of Theorem~\ref{t1}]
Let 
\ben{\label{6}
f(x):=f_h(x)=\int_0^1 \frac{1}{2t} [\E h(\sqrt{t}x+\sqrt{1-t}Z)-\E h(Z)]dt
}
be the solution to the Stein equation (cf.~Lemma 1, Point 3 of \cite{Me09})
\ben{\label{3}
-\mathscr{S}f(x)=h(x)-\E h(Z).
}
It is easy to check that (cf.~Lemma 2, Point 1 of \cite{Me09})
\ben{\label{4}
M_1(f)\leq M_1(h).
}
It is also known that (cf.~Lemma 2, Point 4 of \cite{Me09})
\ben{\label{5}
\sup_x \norm{\text{Hess} f(x)}_{H.S.}\leq \sqrt{\frac{2}{\pi}}M_1(h)\norm{\Sigma^{-1/2}}_{op}.
}
Moreover, we have by Proposition 2.1 of \cite{Ga16}
\ben{\label{8}
M_4(f)\leq\frac{4}{3\sqrt{2\pi}}M_3(h)\norm{\Sigma^{-1/2}}_{op}.
}
Theorem~\ref{t1} follows from \eq{4}--\eq{8} and Lemma \ref{l1}.
\end{proof}

\begin{proof}[Proof of \eqref{singular}]
Define the function $f$ by \eqref{6} again. We have by Lemma 2, Point 1 of \cite{Me09}
\be{
M_1(f)\leq M_1(h)\qquad\text{and}\qquad M_4(f)\leq \frac{1}{4}M_4(h).
}
Also, we have by Lemma 2, Point 2 of \cite{Me09}
\be{
\sup_x \norm{\text{Hess} f(x)}_{H.S.}\leq
\frac{1}{2}\sup_x \norm{\text{Hess} h(x)}_{H.S.}\leq
\frac{\sqrt d}{2}M_2(h),
}
where the last inequality follows because $M_2(h)=\sup_x \norm{\text{Hess} h(x)}_{op}$. 
\eq{singular} follows from the above bounds and Lemma \ref{l1}. 
\end{proof}

Finally, we prove Theorem~\ref{t4} following the same strategy of the proof of Theorem~\ref{t3}.

\begin{proof}[Proof of Theorem~\ref{t4}]
We need to establish a counterpart of Lemma \ref{l1} in the present setting, which can be shown in line with the proof of Proposition 3.5 in \cite{DoViZh18} as follows. 
Let $f:\mathbb{R}^d\to\mathbb{R}$ be a $C^4$ function with bounded partial derivatives. 
Then, similarly to the derivation of \eqref{start}, we obtain for every $t>0$
\[
0=\E\left[\frac{1}{2}\angle{\Lambda^{-1}D_tD_t^\top, \text{Hess} f(W)}_{H.S.}+\Xi_t+ (\Lambda^{-1}D_t )\cdot\nabla f(W) \right],
\]
where $D_t=W_t-W$ and
\[
\Xi_t=\frac{1}{2}\sum_{j,k,l=1}^d(\Lambda^{-1}D_t)_j(D_t)_k(D_t)_lU\partial_{jkl}f(W+(1-U)D_t)
\]
with $U$ being a uniform random variable on $[0,1]$ independent of everything else. By the same argument as in the derivation of \eqref{eq:xi}, we deduce
\[
\E[\Xi_t]=\frac{1}{4}\sum_{j,k,l=1}^d\E[(\Lambda^{-1}D_t)_j(D_t)_k(D_t)_lU\{\partial_{jkl}f(W+(1-U)D_t)-\partial_{jkl}f(W+UD_t)\}].
\]
Thus we obtain
\ba{
|\E[\Xi_t]|&\leq \frac{M_4(f)}{16}\E[|\Lambda^{-1}D_t||D_t|^3]
\leq \frac{M_4(f)}{16}\|\Lambda^{-1}\|_{op}\E[|D_t|^4]\\
&\leq\frac{M_4(f)}{16}\|\Lambda^{-1}\|_{op}\cdot d\sum_{j=1}^d\E((W_t)_j-W_j)^4.
}
Hence, \eqref{03} yields
\[
\limsup_{t\downarrow0}\frac{1}{t}|\E[\Xi_t]|\leq\frac{M_4(f)}{16}d\|\Lambda^{-1}\|_{op}\sum_{j=1}^d\rho_i(W).
\]
Meanwhile, we obtain from \eqref{01}--\eqref{02}
\ba{
-\lim_{t\downarrow0}\frac{1}{t}\E[\Xi_t]&=\E\left[\frac{1}{2}\angle{2\Sigma+\Lambda^{-1}S, \text{Hess} f(W)}_{H.S.}-(W+R)\cdot\nabla f(W)\right]\\
&=\E\mathscr{S}f(W)+\frac{1}{2}\E\left[\angle{\Lambda^{-1}S, \text{Hess} f(W)}_{H.S.}-R\cdot\nabla f(W)\right].
}
Consequently, we conclude
\begin{multline}\label{est-l1}
|\E\mathscr{S}f(W)|
\leq M_1(f)\E|R|+\frac{1}{2}\sup_w\|\mathrm{Hess}f(w)\|_{H.S.}\E\|\Lambda^{-1}S\|_{H.S.}\\
+\frac{M_4(f)}{16}d\|\Lambda^{-1}\|_{op}\sum_{j=1}^d\rho_i(W).
\end{multline}

Now, the remainder of the proof is completely parallel to that of Theorem \ref{t3} with using \eqref{est-l1} instead of Lemma \ref{l1} (note that the function $\tilde h_\alpha$ in the proof of Theorem \ref{t3} has bounded partial derivatives as long as $\alpha>0$, thanks to \eqref{deriv1}).
\end{proof}

\subsection{Proof of applications}\label{sec3.2}

In this subsection, we prove the results stated in Section~\ref{sec2}.
Theorem~\ref{t2} follows from Theorem~\ref{t1} and a new construction of exchangeable pairs for Wishart matrices.

\begin{proof}[Proof of Theorem~\ref{t2}]
We first construct an exchangeable pair satisfying the linearity condition in \eq{1}. 
Let $X^*=\{X_{ik}^*: 1\leq i\leq n, 1\leq k\leq d\}$ be an independent copy of $X$. 
Let $I\sim \text{Unif}[n]$ and $K\sim \text{Unif}[d]$ be independent uniform random indices that are independent of $X$ and $X^*$. 
Let $X'=\{X_{ik}': 1\leq i\leq n, 1\leq k\leq d\}$ where
\be{
X_{ik}'=
\begin{cases}
X_{ik}^*, & \text{if}\ i=I, k=K\\
X_{ik}, & \text{otherwise}.
\end{cases}
}
Let 
\be{
W_{ij}'=\frac{1}{\sqrt{d}} \sum_{k=1}^d X_{ik}' X_{jk}'
}
and regard $W'=\{W_{ij}': 1\leq i<j \leq n\}$ as an ${n \choose 2}$-vector.
By construction, $\mathcal{L}(X, X')=\mathcal{L}(X', X)$; Hence, $\mathcal{L}(W, W')=\mathcal{L}(W', W)$.
For $1\leq i<j \leq n$, we have
\bes{
&\E(W_{ij}'-W_{ij}|X)\\
=&\E\big[ (W_{ij}'-W_{ij})1(i=I) +(W_{ij}'-W_{ij})1(j=I) |X \big]\\
=&\E\big[\frac{1}{\sqrt{d}} (X_{iK}^*-X_{iK})X_{jK}1(i=I) +\frac{1}{\sqrt{d}} X_{iK}(X_{jK}^*-X_{jK})1(j=I)   | X    \big]\\
=&\frac{1}{nd}\sum_{k=1}^d \E \big[  \frac{1}{\sqrt{d}} (X_{ik}^*-X_{ik})X_{jk} +\frac{1}{\sqrt{d}} X_{ik}(X_{jk}^*-X_{jk})  | X \big]\\
=&-\frac{2}{nd} W_{ij}.
}
Therefore,
\be{
\E (W'-W|X)=-\frac{2}{nd} W
}
and \eq{1} is satisfied with 
\be{
\Lambda=\frac{2}{nd}I_{{n \choose 2}},\quad R=0, \quad \mathcal{G}=\sigma(X).
}
Now we compute $E$ in \eq{2} with $\Sigma=I_{\binom{n}{2}}$.
For $i<j$,
\bes{
E_{ij, ij}=&\frac{nd}{4} \E [(W_{ij}'-W_{ij})^2|X]-1\\
=& \frac{nd}{4} \E \big[ (W_{ij}'-W_{ij})^2 1(i=I) +(W_{ij}'-W_{ij})^2 1(j=I) |X    \big]-1\\
=& \frac{d}{4} \E \big[\frac{1}{d} (X_{iK}^*-X_{iK})^2 X_{jK}^2 +\frac{1}{d} X_{iK}^2 (X_{jK}^*-X_{jK})^2   |X  \big] -1\\
=& \frac{1}{4d} \sum_{k=1}^d (X_{ik}^2+X_{jk}^2+2X_{ik}^2 X_{jk}^2)-1,
}
which has mean zero.
For $i<j<l$ (similarly for other cases of one common index),
\bes{
E_{ij, il}=&\frac{nd}{4} \E [(W_{ij}'-W_{ij})(W_{il}'-W_{il})|X]\\
=&\frac{nd}{4} \E [(W_{ij}'-W_{ij})(W_{il}'-W_{il})1(i=I)|X]\\
=&\frac{d}{4} \E \big[\frac{1}{\sqrt{d}} (X_{iK}^*-X_{iK})X_{jK}\frac{1}{\sqrt{d}} (X_{iK}^*-X_{iK})X_{lK}|X     \big]\\
=&\frac{1}{4d}\sum_{k=1}^d (1+X_{ik}^2) X_{jk} X_{lk},
}
and for $i<j$, $l<m$ such that $\{i, j\}\cap \{l,m\}=\emptyset$,
\be{
E_{ij, lm}=\frac{nd}{4} E[(W_{ij}'-W_{ij})(W_{lm}'-W_{lm})|X]=0.
}
Therefore,
\besn{\label{12}
\E \norm{E}_{H.S.}\leq &\sqrt{\frac{1}{4d^2}\sum_{1\leq i<j\leq n} \sum_{k=1}^d (\E X_{ik}^4 +\E X_{jk}^4+2\E X_{ik}^4 X_{jk}^4) +\frac{n^2}{16d^2} \sum_{i=1}^n \sum_{k=1}^d \E(1+X_{ik}^2)^2}\\
\leq & \sqrt{\frac{n^2}{4d}[\E X_{11}^4+(\E X_{11}^4)^2] +\frac{n^3}{16d} [3+\E X_{11}^4]}.
}
We also have
\be{
D:=W'-W=\frac{1}{\sqrt{d}}(X_{IK}^*-X_{IK})(X_{1K}, \dots, X_{(I-1)K}, X_{(I+1)K}, \dots, X_{nK}, 0,\dots, 0)^\top,
}
where we have transformed $W'-W$ into a vector and put all the zeroes to the end.
Therefore,
\besn{\label{13}
&\E[|\Lambda^{-1}D||D|^3] =\frac{nd}{2} \E |D|^4\\
=&\frac{1}{2d^2} \sum_{i=1}^n \sum_{k=1}^d \E (X_{ik}^*-X_{ik})^4 \E (X_{1k}^2+\dots+X_{(i-1)k}^2+X_{(i+1)k}^2+\dots+X_{nk}^2)^2\\
\leq & \frac{1}{2d^2} \sum_{i=1}^n \sum_{k=1}^d (2\E X_{ik}^4+6) \left(\sum_{j=1}^n \E X_{jk}^4+n^2\right)\\
=& \frac{n^3}{d} (\E X_{11}^4+3) \left(\frac{\E X_{11}^4}{n}+1\right).
}
Theorem~\ref{t1}, together with \eq{12} and \eq{13}, yields \eq{9}.
\end{proof}

Next, we apply the Wasserstein bound in Theorem~\ref{t3} to obtain Theorem~\ref{thm:dejong}.

\begin{proof}[Proof of Theorem~\ref{thm:dejong}]
We apply Theorem \ref{t3} with the help of the results in \cite{DoPe17}. 
Following \cite{DoPe17}, we construct an exchangeable pair $(W,W')$ satisfying condition \eq{1} as follows. 
Let $X^*=\{X^*_1,\dots,X^*_n\}$ be an independent copy of $X:=\{X_1,\dots,X_n\}$. 
Also, let $I\sim \text{Unif}[n]$ be an index independent of $X$ and $X^*$. 
Define $X'=\{X'_1,\dots,X'_n\}$ by
\be{
X_{i}'=
\begin{cases}
X_{i}^*, & \text{if}\ i=I,\\
X_{i}, & \text{otherwise}.
\end{cases}
}
Now, since $W$ is $\mcl{F}_{[n]}$-measurable, there is a function $f:\prod_{i=1}^nE_i\to\mathbb{R}^d$ measurable with respect to the product $\sigma$-field of $\mcl{E}_1,\dots,\mcl{E}_n$ such that $W=f(X_1,\dots,X_n)$. Then we define $W':=f(X'_1,\dots,X'_n)$. 
It is easy to check $\mathcal{L}(X, X')=\mathcal{L}(X', X)$; hence, $\mathcal{L}(W, W')=\mathcal{L}(W', W)$. 
Moreover, we have by Lemma 3.2 of \cite{DoPe17}
\[
\E[W'-W\mid X]=-\Lambda W,
\]
where $\Lambda:=n^{-1}\diag(p_1,\dots,p_d)$. 
Hence, Theorem \ref{t3} yields
\ben{\label{dejong-est}
d_{\mathcal{W}}(W,Z)
\leq \|\Sigma^{-1/2}\|_{op}\sqrt{\frac{2}{\pi}}\E\|E\|_{H.S.}
+\|\Sigma^{-1/2}\|_{op}^{3/2}\left(\frac{\pi}{8}\right)^{1/4}d^{1/4}\sqrt{\E[|\Lambda^{-1}D||D|^3]},
}
where $D:=W'-W$ and $E$ is defined by \eqref{2} with $\mcl{G}=\sigma(X)$. 

Now we estimate quantities on the right-hand side of \eqref{dejong-est}. 
First, we have by Lemma 2.12 of \cite{DoPe17}
\[
\frac{n}{p_j}\E[D_j^4]\leq8(\E[W_j^4]-3)+K_{j}\varrho_{n,j}^2
\]
for every $j\in[d]$, where $K_{j}>0$ is a constant depending only on $p_j$. 
Thus, we obtain by the Schwarz inequality
\besn{\label{dejong-second}
\E[|\Lambda^{-1}D||D|^3]
&\leq n\left(\sum_{j=1}^d\frac{1}{p_j^3}\right)^{1/4}\left(\sum_{j=1}^dp_j\right)^{3/4}\E\left[\sum_{j=1}^d\frac{1}{p_j}D_j^4\right]\\
&\leq d\left(\frac{p_d}{p_1}\right)^{3/4}\sum_{j=1}^d\left(8(\E[W_j^4]-3)+K_{j}\varrho_{n,j}^2\right),
}
where the second inequality follows from $p_1\leq \dots \leq p_d$.
Next, define the random matrix $S=(S_{jk})_{1\leq j,k\leq d}$ by
\[
S:=\E[(W'-W)(W'-W)^{\top}\mid X]-2\Lambda\Sigma.
\]
For any $j,k\in[d]$ and $j\leq k$, we have from Eqs.(3.14)--(3.15) and Propositions 3.5--3.6 of \cite{DoPe17} 
\ba{
n^2\E S_{jk}^2&\leq (p_j+p_k)^2\left(\Cov(W_j^2,W_k^2)
+\min\{p_j\varrho_{n,k}^2,p_k\varrho_{n,j}^2\}+C_{j,k}\max\{\varrho_{n,j}^2,\varrho_{n,k}^2\}
\right)1_{\{p_j<p_k\}}\\
&\quad+4p_j^2(\Cov(W_j^2,W_k^2)-2\Sigma_{jk}^2+p_j\min\{\varrho_{n,k}^2,\varrho_{n,j}^2\}
+p_j\varrho_{n,k}\varrho_{n,j}
+C_{j,k}\max\{\varrho_{n,j}^2,\varrho_{n,k}^2\})1_{\{p_j=p_k\}},
}
where $C_{j,k}>0$ is a constant depending only on $p_j,p_k$. Since $\Sigma_{jk}=0$ if $p_j<p_k$ (cf.~\eqref{orthogonal}), we deduce
\ba{
n^2\E S_{jk}^2
\leq (p_j+p_k)^2\left(\Cov(W_j^2,W_k^2)-2\Sigma_{jk}^2
+C_{j,k}'(\varrho_{n,j}^2+\varrho_{n,k}^2)
\right),
}
where $C_{j,k}'>0$ depends only on $p_j,p_k$. Hence we infer that
\ba{
\E\|E\|_{H.S.}^2
=\frac{1}{4}\sum_{j,k=1}^{d}\frac{n^2\E S_{jk}^2}{p_j^2}
\leq\frac{p_d^2}{p_1^2}\sum_{j,k=1}^{d}\left(\Cov(W_j^2,W_k^2)-2\Sigma_{jk}^2
+C_{j,k}'(\varrho_{n,j}^2+\varrho_{n,k}^2)
\right).
}
Using Eq.(4.2) of \cite{NoRo14}, we obtain
\ba{
\sum_{j,k=1}^{d}\left(\Cov(W_j^2,W_k^2)-2\Sigma_{jk}^2\right)
=\sum_{j,k=1}^{d}\left(\E W_j^2W_k^2-\Sigma_{jj}\Sigma_{kk}-2\Sigma_{jk}^2\right)
=\E|W|^4-\E|Z|^4.
}
Therefore, we conclude that
\ben{\label{dejong-first}
\E\|E\|_{H.S.}\leq\sqrt{\E\|E\|_{H.S.}^2}
\leq\sqrt{\frac{p_d^2}{p_1^2}\left(\E|W|^4-\E|Z|^4\right)+C'_{\bs{p}}d\sum_{j=1}^d\varrho_{n,j}^2},
}
where $C'_{\bs{p}}>0$ depends only on $p_1,\dots,p_d$. 
Plugging \eqref{dejong-second}--\eqref{dejong-first} into \eqref{dejong-est}, we obtain the desired result. 
\end{proof}

\begin{proof}[Proof of Corollary \ref{coro:homo}]
Throughout the proof, for every $j\in[d]$, $C_j$ denotes a positive constant depending only on $p_j$. Note that the value of $C_j$ may change from line to line. 

For any $j\in[d]$ and $J\subset[n]$, we have by the uniqueness of the Hoeffding decomposition \eqref{hoeff}
\[
W_j^J=\left\{
\begin{array}{cl}
p_j!f_j(i_1,\dots,i_{p_j})X_{i_1}\cdots X_{i_{p_j}}&\text{if }|J|=p_j\text{ and }J=\{i_1,\dots,i_p\},\\
0 & \text{if }|J|\neq p_j.
\end{array}\right.
\]
Therefore, $W_j$ is a degenerate $U$-statistics of order $p_j$ and $\varrho_{n,j}^2=p_j!^2\mcl{M}(f_j)$. 
Also, note that $M\geq\max_j\sqrt{\E W_j^2}=1$. 
Therefore, in view of Theorem \ref{thm:dejong}, it remains to prove
\ben{\label{homo:aim}
|\E|W|^4-\E|Z|^4|
\leq \sum_{j,k=1}^d\Delta_{j,k}+d\sum_{j=1}^dC_jM^{p_j}\mcl{M}(f_j).
} 

For $\nu>0$, we denote by $\gamma_\pm(\nu)$ the law of the random variable $\pm(\xi-\nu)/\sqrt{\nu}$, where $\xi$ is a gamma variable with shape $\nu$ and rate 1. 
Then, setting $s_i:=\E X_i^3$ for each $i\in[n]$, we construct independent random variables $Y=\{Y_1,\dots,Y_n\}$ so that they are independent of $X$ and satisfy 
\[
Y_i\sim\left\{
\begin{array}{cl}
N(0,1)  & \text{if }s_i=0, \\
\gamma_+(4/s_i^2)  &  \text{if }s_i>0,   \\
\gamma_-(4/s_i^2)   &  \text{if }s_i<0.
\end{array}
\right.
\]
By construction we have $\E X_i^r=\E Y_i^r$ for any $i\in[n]$ and $r=1,2,3$. 
By the Schwarz inequality, we also have
\ban{
s_i^2&\leq\E X_i^2\E X_i^4=\E X_i^4,\label{est-s2}\\
\E Y_i^4&=3(1+s_i^2/2)\leq3(1+\E X_i^4/2)\leq\frac{9}{2}\E X_i^4.\label{est-y4}
}
We define the random vector $\tilde W$ in $\mathbb{R}^d$ by
\[
\tilde W_j:=\sum_{i_1,\dots,i_{p_j}=1}^nf_j(i_1,\dots,i_{p_j})Y_{i_1}\cdots Y_{i_p},\qquad j=1,\dots,d.
\]

\begin{lemma}\label{npr4.3}
For any $j,k\in[d]$, it holds that
\[
|\E W_j^2W_k^2-\E\tilde{W}_j^2\tilde{W}_k^2|
\leq C_{j}M^{p_j}\mcl{M}(f_j)+C_{k}M^{p_k}\mcl{M}(f_k).
\]
\end{lemma}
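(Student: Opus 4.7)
The plan is a Lindeberg-type replacement argument: we interpolate from $X$ to $Y$ one coordinate at a time and exploit the matching of the first three moments to reduce the error at each swap to a purely fourth-order quantity.

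First, I would regard $\varphi(z):=W_j(z)^2W_k(z)^2$ as a polynomial in $z=(z_1,\dots,z_n)$, where $W_\ell(z)$ is the homogeneous sum with $X_i$ replaced by $z_i$. Since $f_j$ and $f_k$ vanish on diagonals, each $W_\ell$ is multilinear in $z$, so $\varphi$ has degree at most $4$ in each individual variable $z_i$. Isolating the degree-four term,
\[
\varphi(z)=z_i^4\,A_{j,i}(z_{-i})^2A_{k,i}(z_{-i})^2+\sum_{r=0}^{3}z_i^r\,c_{r,i}(z_{-i}),
\]
where $A_{\ell,i}(z_{-i}):=p_\ell\sum_{i_2,\dots,i_{p_\ell}}f_\ell(i,i_2,\dots,i_{p_\ell})\,z_{i_2}\cdots z_{i_{p_\ell}}$ is the coefficient of $z_i$ in $W_\ell$. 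Setting $Z^{(i)}:=(Y_1,\dots,Y_i,X_{i+1},\dots,X_n)$ for $0\leq i\leq n$, I telescope
\[
\E W_j^2W_k^2-\E\tilde W_j^2\tilde W_k^2=\sum_{i=1}^{n}\E\bigl[\varphi(Z^{(i-1)})-\varphi(Z^{(i)})\bigr].
\]
At the $i$-th swap only the $i$-th coordinate is changed; the remaining coordinates $Z^{(i-1)}_{-i}=Z^{(i)}_{-i}$ are independent of $X_i$ and $Y_i$. Because $\E X_i^r=\E Y_i^r$ for $r=0,1,2,3$, independence kills the contribution of every term $z_i^r c_{r,i}$ with $r\leq 3$, leaving
\[
\E[\varphi(Z^{(i-1)})-\varphi(Z^{(i)})]=(\E X_i^4-\E Y_i^4)\,\E\bigl[A_{j,i}^2A_{k,i}^2\bigr].
\]
Combined with $|\E X_i^4-\E Y_i^4|\leq\tfrac{11}{2}M$, which follows from \eqref{est-y4}, this yields
\[
|\E W_j^2W_k^2-\E\tilde W_j^2\tilde W_k^2|\leq\tfrac{11}{2}M\sum_{i=1}^{n}\E\bigl[A_{j,i}^2A_{k,i}^2\bigr].
\]

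The main technical obstacle is then to bound $\sum_i\E[A_{j,i}^2A_{k,i}^2]$ in the target form. I would split with the AM--GM inequality $2A_{j,i}^2A_{k,i}^2\leq A_{j,i}^4+A_{k,i}^4$, which reduces the task to bounding $\sum_i\E A_{\ell,i}^4$ for $\ell=j,k$. Each $A_{\ell,i}$ is itself a multilinear homogeneous sum of degree $p_\ell-1$ in independent variables of mean $0$, variance $1$, and fourth moment at most $\tfrac{9}{2}M$ (by \eqref{est-y4} applied to the $Y_m$ coordinates). A standard hypercontractive-type estimate for homogeneous sums (provable by direct expansion of $\E A_{\ell,i}^4$ into sums over partition structures, in the spirit of the arguments in \cite{NoPeRe10}) yields $\E A_{\ell,i}^4\leq C_{p_\ell}M^{p_\ell-1}(\E A_{\ell,i}^2)^2$. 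A direct calculation using the symmetry of $f_\ell$ and its vanishing on diagonals gives $\E A_{\ell,i}^2=p_\ell\cdot p_\ell!\cdot\influence_i(f_\ell)$, while the normalization $\Var(W_\ell)=1$ implies $\sum_i\influence_i(f_\ell)=1/p_\ell!$. Combining,
\[
\sum_{i=1}^{n}\E A_{\ell,i}^4\leq C_{p_\ell}M^{p_\ell-1}\max_{i}\E A_{\ell,i}^2\,\sum_{i=1}^{n}\E A_{\ell,i}^2\leq C'_{p_\ell}M^{p_\ell-1}\mcl{M}(f_\ell).
\]
Plugging this into the AM--GM splitting and absorbing the factor $\tfrac{11}{2}M$ from the Lindeberg swap produces exactly the bound $C_jM^{p_j}\mcl{M}(f_j)+C_kM^{p_k}\mcl{M}(f_k)$. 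The delicate point is tracking the correct power of $M$: one factor of $M$ comes from the fourth-moment difference at the swap, while the remaining $M^{p_\ell-1}$ comes from the fourth-moment estimate for the degree-$(p_\ell-1)$ homogeneous sum $A_{\ell,i}$.
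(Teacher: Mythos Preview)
Your proposal is correct and follows essentially the same approach as the paper's proof: a Lindeberg replacement from $X$ to $Y$ one coordinate at a time, using the matching of the first three moments to reduce each swap to $(\E X_i^4-\E Y_i^4)\,\E[V_{i,j}^2V_{i,k}^2]$ (your $A_{\ell,i}$ is exactly the paper's $V_{i,\ell}$), then AM--GM and the hypercontractive bound from \cite[Lemma 4.2]{NoPeRe10} for the fourth moment of the degree-$(p_\ell-1)$ homogeneous sums, and finally summing $\influence_i(f_\ell)$ via the normalization. The only differences are cosmetic: you present the swap via polynomial degree counting while the paper writes out the expansion of $(U_{i,j}+X_iV_{i,j})^2(U_{i,k}+X_iV_{i,k})^2$ explicitly, and your interpolation runs in the opposite direction.
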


\begin{proof}
The proof is a minor modification of \cite[Lemma 4.3]{NoPeRe10}. We begin by introducing some notation. Given a sequence of random variables $Z=\{Z_1,\dots,Z_n\}$, we set
\[
Q_j(Z)=\sum_{i_1,\dots,i_{p_j}=1}^nf_j(i_1,\dots,i_{p_j})Z_{i_1}\cdots Z_{i_p},\qquad j=1,\dots,d.
\]
For $i\in\{0,1,\dots,n\}$ and $j\in[d]$, we define
\[
Z^{(i)}=\{Z_1^{(i)},\dots,Z_n^{(i)}\}:=\{X_1,\dots,X_i,Y_{i+1},\dots,Y_n\}
\]
and
\begin{align*}
U_{i,j}&:=\sum_{\begin{subarray}{c}
i_1,\dots,i_{p_j}=1\\
i_1\neq i,\dots,i_{p_j}\neq i
\end{subarray}}^{n}
f_j(i_1,\dots,i_{p_j})Z_{i_1}^{(i)}\cdots Z_{i_{p_j}}^{(i)},&
V_{i,j}&:=\sum_{\begin{subarray}{c}
i_1,\dots,i_{p_j}=1\\
\exists l:i_l= i
\end{subarray}}^{n}
f_j(i_1,\dots,i_{p_j})\prod_{l:i_l\neq i}Z_{i_l}^{(i)}.
\end{align*}
By construction $U_{i,j}$ and $V_{i,j}$ are independent of $X_i$ and $Y_i$. Also, we have $Q_j(Z^{(i)})=U_{i,j}+X_iV_{i,j}$ and $Q_j(Z^{(i-1)})=U_{i,j}+Y_iV_{i,j}$. Therefore, we have for any $j,k\in[d]$
\begin{align*}
&\E Q_j(Z^{(i)})^2Q_k(Z^{(i)})^2\\
&=\E(U_{i,j}+X_iV_{i,j})^2(U_{i,k}+X_iV_{i,k})^2\\
&=\E(U_{i,j}^2+X_i^2V_{i,j}^2+2U_{i,j}X_iV_{i,j})(U_{i,k}^2+X_i^2V_{i,k}^2+2U_{i,k}X_iV_{i,k})\\
&=\E[U_{i,j}^2U_{i,k}^2+U_{i,j}^2V_{i,k}^2+V_{i,j}^2U_{i,k}^2
+X_i^4V_{i,j}^2V_{i,k}^2\\
&\qquad+2X_i^3V_{i,j}^2U_{i,k}V_{i,k}
+2X_i^3V_{i,k}^2U_{i,j}V_{i,j}
+4U_{i,j}V_{i,j}U_{i,k}V_{i,k}]
\end{align*}
and a similar expression for $\E Q_j(Z^{(i-1)})^2Q_k(Z^{(i-1)})^2$ with replacing $X_i$ by $Y_i$. Hence, noting $\E X_i^3=\E Y_i^3$, we obtain 
\ba{
&|\E Q_j(Z^{(i)})^2Q_k(Z^{(i)})^2-\E Q_j(Z^{(i-1)})^2Q_k(Z^{(i-1)})^2|\\
&=|\E(X_i^4-Y_i^4)\E V_{i,j}^2V_{i,k}^2|
\leq \max\{\E X_i^4,\E Y_i^4\}\frac{\E V_{i,j}^4+\E V_{i,k}^4}{2}.
}
We have by Lemma 4.2 of \cite{NoPeRe10}
\ba{
\E V_{i,j}^4&\leq\max_{1\leq i\leq n}\max\{\E X_i^4,\E Y_i^4\}^{p_j-1}(2\sqrt3)^{4(p_j-1)}(\E V_{i,j}^2)^2\\
&\leq C_j\max_{1\leq i\leq n}\max\{\E X_i^4,\E Y_i^4\}^{p_j-1}\influence_i(f_j)^2.
}
Combining these estimates with \eqref{est-y4}, we obtain
\ba{
&|\E Q_j(Z^{(i)})^2Q_k(Z^{(i)})^2-\E Q_j(Z^{(i-1)})^2Q_k(Z^{(i-1)})^2|\\
&\leq C_{j}M^{p_j}\influence_i(f_j)^2+C_{k}M^{p_k}\influence_i(f_k)^2.
}
Since $W_j=Q_j(X)=Q_j(Z^{(n)})$, $\tilde W_j=Q_j(Y)=Q_j(Z^{(0)})$ and
\[
\sum_{i=1}^n\influence_i(f_j)=\sum_{i_1,\dots,i_{p_j}=1}^nf_j(i_1,\dots,i_{p_j})^2
=\frac{1}{p_j!},
\]
we conclude
\ba{
|\E W_j^2W_k^2-\E\tilde{W}_j^2\tilde{W}_k^2|
&\leq\sum_{i=1}^n|\E Q_j(Z^{(i)})^2Q_k(Z^{(i)})^2-\E Q_j(Z^{(i-1)})^2Q_k(Z^{(i-1)})^2|\\
&\leq C_{j}M^{p_j}\mcl{M}(f_j)+C_{k}M^{p_k}\mcl{M}(f_k).
}
This completes the proof.
\end{proof}

We turn to the proof of \eqref{homo:aim}. 
We have by Lemma \ref{npr4.3}
\ben{\label{homo-1}
|\E|W|^4-\E|\tilde{W}|^4|
\leq\sum_{j,k=1}^d|\E W_j^2W_k^2-\E\tilde{W}_j^2\tilde{W}_k^2|
\leq d\sum_{j=1}^dC_jM^{p_j}\mcl{M}(f_j).
}
Using Eq.(4.2) of \cite{NoRo14}, we obtain
\ben{\label{homo0}
\E|\tilde{W}|^4-\E|Z|^4
=\sum_{j,k=1}^d(\E\tilde{W}_j^2\tilde{W}_k^2-\Sigma_{jj}\Sigma_{kk}-2\Sigma_{jk}^2)
=\sum_{j,k=1}^d(\Cov(\tilde{W}_j^2,\tilde{W}_k^2)-2\Sigma_{jk}^2).
}
We have by Eq.(5.17) of \cite{Ko19} 
\bmn{\label{homo1}
|\Cov(\tilde{W}_j^2,\tilde{W}_k^2)-2\Sigma_{jk}^2|
\leq 1_{\{p_j\ne p_k\}}\sqrt{\E\tilde{W}_{j\wedge k}^4}\sqrt{\E\tilde{W}_{j\vee k}^4-3}\\
+1_{\{p_j=p_k\}}\left\{2\sqrt{\E\tilde{W}_j^4-3}\sqrt{\E\tilde{W}_k^4-3}
+(2^{-p_j}\ol{v}_n^{p_j}-1)\sqrt{\mcl{M}(f_j)\mcl{M}(f_k)}\right\},
}
where $\ol{v}_n:=\max_{1\leq i\leq n}(2+s_i^2/2)$. From \eqref{est-s2} we obtain
\ben{\label{homo2}
2^{-p_j}\ol{v}_n^{p_j}\leq C_j\left(1+\max_{1\leq i\leq n}s_i^{2p_j}\right)\leq C_jM^{p_j}.
}
Also, we have by Lemma 4.2 of \cite{NoPeRe10} and \eqref{est-y4}
\ben{\label{homo3}
\E\tilde{W}_j^4\leq C_jM^{p_j}(\E\tilde{W}_j^2)^2=C_jM^{p_j}.
}
Moreover, Lemma \ref{npr4.3} yields
\ben{\label{homo4}
\E\tilde{W}_j^4\leq\E W_j^4+C_jM^{p_j}\mcl{M}(f_j).
}
Inserting \eqref{homo1}--\eqref{homo4} into \eqref{homo0} and using the AM-GM inequality, we obtain
\bm{
|\E|\tilde{W}|^4-\E|Z|^4|
\leq \sum_{j,k=1}^d1_{\{p_j<p_k\}}C_jM^{p_j/2}\sqrt{(\E W_k^4-3)+C_kM^{p_k}\mcl{M}(f_k)}\\
+2\sum_{j,k=1}^d1_{\{p_j=p_k\}}\left((\E W_j^4-3)+C_jM^{p_j}\mcl{M}(f_j)\right).
} 
Combining this estimate with \eqref{homo-1}, we obtain \eqref{homo:aim}.
\end{proof}

Finally, we apply the continuous version of the Wasserstein bound in Theorem~\ref{t4} to obtain Theorem~\ref{t5}.

\begin{proof}[Proof of Theorem~\ref{t5}]
From the proof of Theorem 1.7 in \cite{DoViZh18}, there is a family of $d$-dimensional random vectors $(W_t)_{t>0}$ satisfying the assumptions of Theorem \ref{t4} with $\Lambda=\diag(q_1,\dots,q_d)$, $R=0$, $\rho_j(W)=2(4q_j-3)\left(\E W_j^4-3(\E W_j^2)^2\right)$ and some $d\times d$ random matrix $S$. Moreover, from Eqs.(4.2)--(4.3) in \cite{DoViZh18}, this matrix $S$ satisfies
\[
\E\|S\|_{H.S.}\leq(2q_d-1)\sqrt{\E[|W|^4-|Z|^4]}.
\]
Consequently, we obtain
\begin{multline*}
d_{\mathcal{W}}(W,Z)
\leq\norm{\Sigma^{-1/2}}_{op}\frac{2q_d-1}{q_1\sqrt{2\pi}}\sqrt{\E[|W|^4-|Z|^4]}\\
+\norm{\Sigma^{-1/2}}_{op}^{3/2}\left(\frac{\pi}{8}\right)^{1/4}\tr(\Sigma)^{1/4}\sqrt{d}\sqrt{\frac{2}{q_1}\sum_{j=1}^d(4q_j-3)\left(\E W_j^4-3(\E W_j^2)^2\right)}.
\end{multline*}
Since $\sqrt{2\pi}\geq2$ and $\max_jq_j=q_d$, we obtain \eqref{pois1}. 
Finally, if $q_1=\cdots=q_d$, Lemma 4.1 in \cite{DoViZh18} yields
\[
\sqrt{\E[|W|^4-|Z|^4]}\leq\sqrt2\sum_{j=1}^d\sqrt{\E W_j^4-3(\E W_j^2)^2}
\leq\sqrt{2d\sum_{j=1}^d\left(\E W_j^4-3(\E W_j^2)^2\right)}.
\]
Hence \eqref{pois2} holds true. 
\end{proof}

\appendix

\section{Appendix: Optimal Wasserstein Bound for Wishart Matrices}

For multivariate normal approximation of Wishart matrices, a direct application of Theorem \ref{t3}
yields a sub-optimal Wasserstein bound (cf. Remark \ref{l2remark}). 
However, we can modify the proof of Theorem \ref{t3} to further exploit the independence structure in Wishart matrices and obtain the optimal Wasserstein bound assuming finite sixth moments. More precisely, we obtain:
\begin{theorem}\label{tA1}
Under the conditions of Theorem \ref{t2}, if we assume in addition that $\E X_{11}^6<\infty$, then
\be{
d_{\mathcal{W}}(W,Z)\leq C\sqrt{\frac{n^3}{d}}\vee\left(\frac{n^3}{d}\right)^{2/3},
}
where $C$ is a constant only depending on $\E X_{11}^6$.
\end{theorem}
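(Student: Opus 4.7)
The plan is to adapt the smoothing/balancing step of the proof of Theorem \ref{t3} to the Wishart exchangeable pair, exploiting under the sixth-moment hypothesis a much sparser decomposition of the fourth-order remainder term. The exchangeable pair is the one used in the proof of Theorem \ref{t2}, obtained by resampling a single entry $X_{IK}$ of the data matrix. The Hilbert--Schmidt estimate \eqref{12} on $\E\|E\|_{H.S.}$ is retained and produces, through the standard $\sqrt{2/\pi}\,\E\|E\|_{H.S.}$ term, the first piece $\sqrt{n^3/d}$ of the stated bound. The improvement must therefore come from the fourth-order remainder $\Xi$, since a direct application of Theorem \ref{t3} yields only the suboptimal $\sqrt{n^4/d}$.

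The decisive structural observation is that, conditional on $(I,K)$ and on $\{X_{jK}:j\neq I\}$, the increment $D=W'-W$ is rank-one: $D=\tfrac{1}{\sqrt d}(X^*_{IK}-X_{IK})\,c$, where $c\in\mathbb{R}^{\binom{n}{2}}$ is a deterministic vector supported on the $n-1$ coordinates $\{(I,j):j\neq I\}$ of squared length $|c|^2 = S_K := \sum_{j\neq I} X_{jK}^2$. Every fourth-order sum appearing in the Taylor-remainder analysis of Lemma \ref{l1} therefore collapses to a one-dimensional directional quartic of the form $\tfrac{n(X^*_{IK}-X_{IK})^4}{2d}\,\partial^4 f(w)[c,c,c,c]$. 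Under the extra assumption $\E X_{11}^6<\infty$, the squared length $S_K$ is sharply concentrated around $n-1$, with $\E(S_K-(n-1))^2=(n-1)(\E X_{11}^4-1)$ and $\E S_K^3\lesssim n^3$. Decomposing $|c|^4=S_K^2$ into a deterministic leading term $(n-1)^2$ plus centered fluctuations, one can combine a directional Gaussian integration by parts for $\partial^4\tilde h_\alpha$ (via the explicit formula \eqref{eq:idp}) with the moment bounds on $S_K-(n-1)$ to obtain a refined effective bound on $\E[\Xi]$ with strictly better $d$-dependence than the naive $M_4(\tilde h_\alpha)\,\E[|\Lambda^{-1}D||D|^3]$.

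Re-running the integration of $|\E\mathscr{S}\tilde h_\alpha(W)|\,\tan\alpha\,d\alpha$ from the smoothing parameter $\epsilon$ up to $\pi/2$ with the refined remainder, and re-optimising $\epsilon$, the fourth-order contribution to the Wasserstein bound is reduced to $(n^3/d)^{2/3}$ in place of $\sqrt{n^4/d}$. The dichotomy already present at the end of the proof of Theorem \ref{t3} (invoking the trivial bound $d_{\mathcal W}(W,Z)\leq2\sqrt{\tr\Sigma}\sim n$ when the optimal $\epsilon$ would exceed $\pi/2$) handles the degenerate regime $d\lesssim n^{3/2}$, where $(n^3/d)^{2/3}\gtrsim n$ anyway. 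Combining with the first term gives $d_{\mathcal W}(W,Z)\leq C\sqrt{n^3/d}\vee(n^3/d)^{2/3}$.

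The main obstacle is to genuinely gain a factor $d^{1/6}$ relative to the crude pointwise bound. Neither the rank-one reduction alone nor a plain directional Gaussian integration by parts is sufficient: both merely trade the factor $M_4(\tilde h_\alpha)|c|^4$ for an expression of comparable (or larger) magnitude, owing to the $\E|Z|\sim\sqrt{\tr\Sigma}\sim n$ carried by the perpendicular Gaussian directions. The saving comes only after combining the directional integration by parts with the concentration of $S_K$ around its mean, through the cancellation between the odd powers of $(X^*_{IK}-X_{IK})$ and the centered fluctuations of $S_K-(n-1)$; it is at this cancellation step that the sixth-moment hypothesis becomes indispensable.
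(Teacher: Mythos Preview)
Your proposal correctly identifies the exchangeable pair, correctly retains the estimate \eqref{12} on $\E\|E\|_{H.S.}$ for the $\sqrt{n^3/d}$ piece, and correctly observes the rank-one structure $D=d^{-1/2}(X^*_{IK}-X_{IK})c$. However, the decisive step---how to improve the fourth-order remainder from the naive rate $\sqrt{n^4/d}$ to $(n^3/d)^{2/3}$---is not carried out, and the mechanism you describe for it does not work. The remainder $\Xi$ contains the \emph{even} power $(X^*_{IK}-X_{IK})^4$, so there are no ``odd powers of $(X^*_{IK}-X_{IK})$'' to cancel against the centred fluctuation $S_K-(n-1)$. Decomposing $S_K^2=(n-1)^2+2(n-1)(S_K-(n-1))+(S_K-(n-1))^2$ merely reproduces the leading order $\sim n^2$; it does not by itself reduce the contribution of $\E[|\Lambda^{-1}D||D|^3]\sim n^3/d$ below what Theorem~\ref{t3} already gives. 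Nor does a directional Gaussian integration by parts on $\partial^4\tilde h_\alpha(W)[c,c,c,c]$ help: $c$ is coupled to $W$ through the $K$-th column of $X$, so one cannot freeze $c$ and integrate the Gaussian smoothing independently; as you yourself note, the perpendicular Gaussian directions then reintroduce a factor $\sqrt{\tr\Sigma}\sim n$.

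The paper's route is entirely different and does not rely on any such cancellation. It introduces a truncation $1_{\{|D|\leq\tan\alpha\}}$ into \eqref{start}, then Taylor-expands to all orders (following Bonis) and uses two matrix lemmas to replace the scalar quantity $\E[|\Lambda^{-1}D||D|^3]$ by the Hilbert--Schmidt norm of a \emph{conditional} expectation, namely $\lambda^{-1}\E\|\E[DD^\top|D|^2\mid\mcl G]\|_{H.S.}$. The point of taking $\E[\cdot\mid\mcl G]$ first is that it averages over $(I,K)$ and over $X^*$, producing a sum $\sum_{k=1}^d$ of terms that are \emph{independent across $k$}. The independence is then exploited through Rademacher symmetrisation and a $3/2$-moment computation, and it is precisely in that moment computation that the sixth-moment hypothesis enters (one needs $\E[(X^*_{lk}-X_{lk})^6]$ and $\E[(\sum_q X_{qk}^2)^3]$). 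The $(n^3/d)^{2/3}$ emerges from the balance between the $\binom{n}{2}^{1/4}$ prefactor and the $3/2$-moment bound on the fluctuation $\sum_k Y^{ik}$, not from any interaction between $(X^*_{IK}-X_{IK})$ and $S_K$.
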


Below, we first give the modified version of Theorem \ref{t3} (cf. Theorem \ref{tA2}) using truncation,
we then use it to prove Theorem \ref{tA1}.
In the remainder of this appendix, we use $C$ to denote positive constants, which may depend on $\E X_{11}^6$ and may be different in different expressions.

\subsection{A modified Wasserstein bound}

We use the idea of truncation from \cite{Bo20} to modify the proof of Theorem \ref{t3} as follows.
We assume that 
\be{
\Sigma=I_d,\quad \Lambda=\lambda I_d, \quad R=0
}
as in the case for Wishart matrices.
Recall that in \eq{14}, we need to bound
\be{
\int_\eps^{\pi/2}|\E[\mathscr{S}\tilde h_\alpha(W)]|\tan\alpha ~d\alpha,
}
where for a 1-Lipschitz function $h$,
\be{
\mathscr{S} \tilde h_\alpha(w)=\Delta \tilde h_\alpha (w)- w\cdot \nabla \tilde h_\alpha(w)
}
and
\be{
\tilde h_\alpha(w)=\int_{\mathbb{R}^d} h(w \cos \alpha+z \sin\alpha) \phi_d(z) dz.
}
We begin by applying a truncation to \eq{start} with $f=\tilde h_\alpha$ as follows.
Note that $\tilde h_\alpha$ is infinitely differentiable for $\alpha\in (0,\pi/2)$.
We have
\begin{align*}
0&=\frac{1}{2}\E[\lambda^{-1}D\cdot(\nabla \tilde h_\alpha (W')+\nabla \tilde h_\alpha(W))1_{\{|D|\leq \tan \alpha\}}]\\
&=\E\left[\frac{1}{2}\lambda^{-1}D\cdot(\nabla \tha(W')-\nabla \tha(W))1_{\{|D|\leq \tan \alpha\}}+\lambda^{-1}D\cdot\nabla \tha(W)1_{\{|D|\leq \tan \alpha\}}\right]\\
&=\E\left[\langle \frac{D D^\top}{2\lambda} , \text{Hess} \tha (W)\rangle_{H.S.}+\Xi_\alpha
- W\cdot \nabla \tha (W) -  \lambda^{-1} D\cdot \nabla \tha (W) 1_{\{|D|>\tan \alpha\}}    \right]\\
&=\E \Big[ \Delta \tha(W)+\langle E, \text{Hess} \tha (W)\rangle_{H.S.}-  \langle \frac{D D^\top}{2\lambda} , \text{Hess} \tha (W)\rangle_{H.S.}1_{\{|D|>\tan \alpha\}} \\
&\qquad \quad  +\Xi_\alpha
- W\cdot \nabla \tha (W) -  \lambda^{-1} D\cdot \nabla \tha (W) 1_{\{|D|>\tan \alpha\}}    \Big],
\end{align*}
where 
\be{
\Xi_\alpha=\frac{1}{2\lambda}\sum_{j,k,l=1}^d D_jD_kD_lU\partial_{jkl}\tha(W+(1-U)D)1_{\{|D|\leq \tan \alpha\}}.
}
This implies
\besn{\label{fA2}
|\E \mathscr{S}\tha(W)|\leq& |\E \lambda^{-1}  D\cdot \nabla \tha (W) 1_{\{|D|>\tan \alpha\}}| + |\E\langle E, \text{Hess} \tha (W)\rangle_{H.S.}|\\
&+ |\E \langle \frac{D D^\top}{2\lambda} , \text{Hess} \tha (W)\rangle_{H.S.}1_{\{|D|>\tan \alpha\}}|+|\E \Xi_\alpha|\\
=: & R_{1\alpha}+R_{2\alpha}+R_{3\alpha} +R_{4\alpha}.
}
We bound the four terms one by one as follows.

Because $|\nabla h(w)|\leq 1$, we have
\be{
R_{1\alpha}\leq \lambda^{-1} \E|\E[D1_{\{|D|>\tan \alpha\}}|\mathcal{G}]|.
}
From \eq{deriv2}, we have
\be{
R_{2\alpha}\leq \frac{C}{\tan \alpha} \E\norm{E}_{H.S.}.
}
For $R_{3\alpha}$, again using \eq{deriv2}, we have
\be{
R_{3\alpha}\leq \frac{C}{\lambda \tan \alpha} \E \norm{\E[D D^\top 1_{\{|D|>\tan \alpha\}}\mid\mcl{G}]}_{H.S.}.
}
We will use the following lemma which will be proved at the end of this subsection.
\begin{lemma}\label{lA1}
Let $Y=(Y_{ij})_{1\leq i, j\leq d}$ be a $d\times d$ positive semidefinite symmetric random matrix. Let $F$ and $G$ be two random variables such that $|F|\leq G$. Suppose that $\E|Y_{ij} F|<\infty$ for all $i,j=1,\dots, d$. 
Let $\mathcal{G}$ be an arbitrary $\sigma$-field.
Then we have
\be{
\norm{\E[YF|\mathcal{G}]}_{H.S.}\leq \norm{\E[YG|\mathcal{G}]}_{H.S.}.
}
\end{lemma}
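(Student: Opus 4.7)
The plan is to reduce the desired inequality to a trace computation on symmetric matrices, exploiting the identity
\[
\|B\|_{H.S.}^2-\|A\|_{H.S.}^2=\tr\bklr{(B-A)(B+A)},
\]
which holds for any two symmetric matrices $A,B$ because $\tr(AB)=\tr(BA)$ cancels the cross terms. With $A:=\E[YF\mid\mathcal{G}]$ and $B:=\E[YG\mid\mathcal{G}]$, both matrices are a.s.\ symmetric since $Y$ is symmetric, $F,G$ are scalar, and conditional expectation acts entrywise.

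The next step is to show that $B-A$ and $B+A$ are both positive semidefinite (PSD) almost surely. Because $|F|\leq G$, the scalar factors $G-F$ and $G+F$ are both nonnegative a.s., and since $Y\succeq0$ a.s., the random matrices $(G-F)Y$ and $(G+F)Y$ are symmetric PSD. Conditional expectation preserves the PSD property: for each deterministic $v\in\mathbb{R}^d$, $v^{\top}\E[M\mid\mathcal{G}]v=\E[v^{\top}Mv\mid\mathcal{G}]\geq0$ a.s.\ whenever $M$ is PSD, and passing to a countable dense set of test vectors gives PSD a.s.\ on a single null set. Therefore $B-A=\E[(G-F)Y\mid\mathcal{G}]\succeq0$ and $B+A=\E[(G+F)Y\mid\mathcal{G}]\succeq0$ a.s.

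The proof is then closed by invoking the standard fact that the trace of a product of two PSD matrices is nonnegative: writing $P=P^{1/2}P^{1/2}$, one has $\tr(PQ)=\tr(P^{1/2}QP^{1/2})\geq0$ since $P^{1/2}QP^{1/2}$ is PSD. Applied to $P=B-A$ and $Q=B+A$, this gives $\tr((B-A)(B+A))\geq0$, hence $\|A\|_{H.S.}\leq\|B\|_{H.S.}$ a.s. There is no substantial obstacle; the only point requiring care is the ``PSD a.s.\ on a single null set'' step for the conditional expectation, which is routinely handled by the countable-dense-set argument.
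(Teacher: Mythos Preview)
Your proof is correct. Both your argument and the paper's begin identically by establishing that $B\pm A:=\E[YG\mid\mathcal{G}]\pm\E[YF\mid\mathcal{G}]$ are almost surely positive semidefinite. The difference lies only in the concluding step: the paper invokes an eigenvalue comparison (Corollary~7.7.4 in Horn--Johnson) to deduce $|\lambda_j(A)|\leq\lambda_j(B)$ for every $j$ and then sums squares, whereas you use the elementary identity $\|B\|_{H.S.}^2-\|A\|_{H.S.}^2=\tr\bigl((B-A)(B+A)\bigr)$ together with the fact that the trace of a product of two PSD matrices is nonnegative. Your route is slightly more self-contained, avoiding the external matrix-analysis reference; the paper's route yields the marginally stronger intermediate conclusion of eigenvalue-wise domination, though that extra information is not used elsewhere.
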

From Lemma \ref{lA1}, 
\be{
R_{3\alpha}\leq \frac{C}{\lambda \tan^{3}\alpha} \E \norm{\E[D D^\top |D|^2\mid\mcl{G}]}_{H.S.	}.
}
For $R_{4\alpha}$, following the symmetry argument as in \eq{eq:xi}, we have
\be{
R_{4\alpha}=\left| \frac{1}{4\lambda} \E\langle U D_\alpha^{\otimes3}, \nabla^3 \tha(W+(1-U)D_\alpha)-\nabla^3 \tha (W+UD_\alpha)    \rangle    \right|,
}
where $D_\alpha:=D1_{\{|D|\leq \tan \alpha\}}$. 
By Taylor's expansion (cf. Lemma 1 of \cite{Bo20} and its proof),
\bes{
&\E\langle U D_\alpha^{\otimes3}, \nabla^3 \tha(W+(1-U)D_\alpha)-\nabla^3 \tha (W+UD_\alpha)    \rangle\\
=& \sum_{k=4}^\infty \frac{1}{(k-3)!} \E\langle U((1-U)^{k-3}-U^{k-3}) D_\alpha^{\otimes k}, \nabla^k \tha(W) \rangle.
}
This implies, by the Cauchy-Schwarz inequality,
\bes{
R_{4\alpha}\leq & \frac{\cos \alpha}{4\lambda}\E \sqrt{\sum_{k=4}^\infty \frac{(\tan \alpha)^{2k-2}}{\cos^2 \alpha (k-1)!} |\nabla^k \tha(W)|^2}\\
&\times \sqrt{\sum_{k=4}^\infty \frac{(k-1)!}{((k-3)!)^2 (\tan \alpha)^{2k-2}}|\E[D_\alpha^{\otimes k}|\mathcal{G}]|^2 },
}
where $|\cdot|$ denotes the Euclidean norm by regarding the tensor as a $d^k$-vector.
For every $w\in \mathbb{R}^d$, we have by a similar argument to the proof of Eq.(17) in \cite{Bo20},
\be{
\sum_{k=4}^\infty \frac{(\tan \alpha)^{2k-2}}{\cos^2 \alpha (k-1)!} |\nabla^k \tha(W)|^2\leq M_1(h)\leq 1.
}
We will use the following lemma which will be proved at the end of this subsection.
\begin{lemma}\label{lA2}
Let $Y$ be a random vector in $\mathbb{R}^d$ such that $\E|Y|^k<\infty$ for some integer $k\geq 2$. 
Let $\mathcal{G}$ be an arbitrary $\sigma$-field.
Then
\be{
|\E [Y^{\otimes k}|\mathcal{G}]|\leq \norm{\E[Y Y^\top|Y|^{k-2}|\mathcal{G}]}_{H.S.}.
}
\end{lemma}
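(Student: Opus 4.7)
The plan is to reduce the inequality to a pointwise Cauchy--Schwarz bound via a standard \emph{doubling} trick. Since the claim is a $\mathcal{G}$-almost sure comparison between two $\mathcal{G}$-measurable quantities, I would pass to a regular conditional distribution $\mu_\omega$ of $Y$ given $\mathcal{G}$; it then suffices to prove, for every probability measure $\mu$ on $\mathbb{R}^d$ with $\int|y|^k d\mu<\infty$, the deterministic bound
\[
\left|\int y^{\otimes k} d\mu(y)\right|^2 \leq \norm{\int yy^\top |y|^{k-2} d\mu(y)}_{H.S.}^2.
\]

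The key step is to open both squared norms as double integrals against $\mu\otimes\mu$ and collapse the index sums using the identity $\sum_i y_i y'_i = \langle y,y'\rangle$. On the left,
\[
\left|\int y^{\otimes k}d\mu\right|^2 = \sum_{i_1,\dots,i_k} \int\int \prod_{\ell=1}^k (y_{i_\ell}y'_{i_\ell})\, d\mu(y) d\mu(y') = \int\int \langle y,y'\rangle^k d\mu(y) d\mu(y'),
\]
and a parallel computation on the right, with only the two matrix indices to collapse, produces $\int\int \langle y,y'\rangle^2 |y|^{k-2}|y'|^{k-2} d\mu(y) d\mu(y')$. Exchanging sum and integral is justified by $|y_{i_1}\cdots y_{i_k}| \leq |y|^k$ together with $\int |y|^k d\mu<\infty$, so the inner $d^k$-fold sum is finite $\mu\otimes\mu$-a.e.

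The problem is thus reduced to the pointwise bound
\[
\langle y,y'\rangle^k \leq \langle y,y'\rangle^2\, |y|^{k-2}|y'|^{k-2},
\]
which I would obtain by factoring out $\langle y,y'\rangle^2 \geq 0$ and applying Cauchy--Schwarz to the remaining factor: $\langle y,y'\rangle^{k-2} \leq |\langle y,y'\rangle|^{k-2} \leq (|y||y'|)^{k-2}$. This is valid regardless of the parity of $k-2$ or the sign of $\langle y,y'\rangle$, since the right-hand side is nonnegative and dominates the absolute value of the left. Integrating against $\mu\otimes\mu$ concludes the argument. Essentially no obstacle remains once the doubling trick is spotted: the rest is an algebraic collapse of sums and a single application of Cauchy--Schwarz to the inner product $\langle y,y'\rangle$, with the moment hypothesis $\E|Y|^k<\infty$ providing exactly the integrability needed along the way.
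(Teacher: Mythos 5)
Your proof is correct and follows essentially the same route as the paper: the paper also reduces to the unconditional case via a regular conditional distribution, introduces an independent copy $Y'$ (your $\mu\otimes\mu$ doubling is the same device) to identify $|\E Y^{\otimes k}|^2=\E\langle Y,Y'\rangle^k$ and $\|\E[YY^\top|Y|^{k-2}]\|_{H.S.}^2=\E\bigl[\langle Y,Y'\rangle^2|Y|^{k-2}|Y'|^{k-2}\bigr]$, and concludes with the pointwise Cauchy--Schwarz bound $\langle y,y'\rangle^{k-2}\leq(|y||y'|)^{k-2}$.
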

From Lemma \ref{lA2} and Lemma \ref{lA1} and using $|D_\alpha|\leq \tan \alpha$, we obtain for $k\geq 4$
\bes{
&|\E[D_\alpha^{\otimes k}|\mathcal{G}]|^2\leq |\E[D_\alpha D_\alpha^\top \norm{D_\alpha|^{k-2}|\mathcal{G}]}_{H.S.}^2\\
\leq & (\tan \alpha)^{2k-8} \norm{\E[D_\alpha D_\alpha^\top |D_\alpha|^2|\mathcal{G}]}_{H.S.}^2
\leq (\tan \alpha)^{2k-8} \norm{\E[D D^\top |D|^2|\mathcal{G}]}_{H.S.}^2.
}
We infer that
\bes{
&\sum_{k=4}^\infty \frac{(k-1)!}{((k-3)!)^2 (\tan \alpha)^{2k-2}}|\E[D_\alpha^{\otimes k}|\mathcal{G}]|^2\\
\leq & \left(\sum_{k=4}^\infty \frac{(k-1)!}{((k-3)!)^2} \right) \frac{1}{\tan^6 \alpha} \norm{\E[D D^\top |D|^2|\mathcal{G}]}_{H.S.}^2\\
\leq & \frac{C}{\tan^6 \alpha}\norm{\E[D D^\top |D|^2|\mathcal{G}]}_{H.S.}^2,
}
and hence
\be{
R_{4\alpha}\leq \frac{C}{\lambda \tan^3\alpha} \E\norm{\E[D D^\top |D|^2|\mathcal{G}]}_{H.S.}.
}
Combining the bounds for $R_{1\alpha}$--$R_{4\alpha}$, we have, from \eq{14} and \eq{fA2},
\bes{
d_{\mathcal{W}}(W, Z)\leq&\int_\eps^{\pi/2}|\E[\mathscr{S}\tilde h_\alpha(W)]|\tan\alpha ~d\alpha
+2\sqrt{d}\sin\frac{\eps}{2}\\
\leq & \lambda^{-1}\int_\eps^{\pi/2} \E|\E[D1_{|D|>\tan \alpha}|\mathcal{G}]|\tan\alpha ~d\alpha+C\E\norm{E}_{H.S.}\\
&+C\lambda^{-1}\int_\eps^{\pi/2}\frac{1}{ \tan^2\alpha} \E\norm{\E[D D^\top |D|^2|\mathcal{G}]}_{H.S.} d\alpha
+2\sqrt{d}\sin\frac{\eps}{2}.
}
Optimizing $\eps$, we obtain:
\begin{theorem}\label{tA2}
Under the conditions of Theorem \ref{t3}, if we assume in addition that 
\be{
\Sigma=I_d,\quad \Lambda=\lambda I_d, \quad R=0,
}
then
\bes{
d_{\mathcal{W}}(W, Z)\leq& \lambda^{-1}\int_0^{\pi/2} \E|\E[D1_{|D|>\tan \alpha}|\mathcal{G}]|\tan\alpha ~d\alpha
+C\E\norm{E}_{H.S.}\\
&+C d^{1/4} \sqrt{\lambda^{-1}\E\norm{\E[D D^\top |D|^2|\mathcal{G}]}_{H.S.} }.
}
\end{theorem}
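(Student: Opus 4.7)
The plan is to follow the proof of Theorem \ref{t3} but replace the unconditional exchangeability identity with a \emph{truncated} version that restricts to the event $\{|D|\leq\tan\alpha\}$. Since $|D|$ is invariant under the swap $(W,W')\leftrightarrow(W',W)$ while $D$ is antisymmetric, the identity
\[
0=\tfrac12\E\bklel\lambda^{-1}D\cdot\bklr{\nabla\tilde h_\alpha(W')+\nabla\tilde h_\alpha(W)}\,1_{\{|D|\leq\tan\alpha\}}\bkler
\]
still holds, and the truncation is what allows us to control the blow-up of the derivatives of $\tilde h_\alpha$ as $\alpha\downarrow0$ (where $M_{r+1}(\tilde h_\alpha)\sim\cos^{r+1}\alpha/\sin^r\alpha$) by matching them against the truncated size of $D$. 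The choice of threshold $\tan\alpha$ is precisely what balances these two competing scales.

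Next, I would carry out Taylor's expansion of $\nabla\tilde h_\alpha(W')-\nabla\tilde h_\alpha(W)$ to third order and invoke the linearity condition and the definition of $E$ to rewrite the above identity as an expression for $\E\mathscr{S}\tilde h_\alpha(W)$. The non-vanishing part produces four error terms $R_{1\alpha},R_{2\alpha},R_{3\alpha},R_{4\alpha}$: a boundary contribution from the truncation of the linear term, the $E$-term, the boundary contribution from the truncation of the quadratic Hessian term, and the third-order Taylor remainder $\Xi_\alpha$. For $R_{1\alpha}$ I would use $\|\nabla\tilde h_\alpha\|_\infty\leq M_1(h)\leq 1$. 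For $R_{2\alpha}$ the Hessian bound \eqref{deriv2} gives a factor $1/\tan\alpha$. For $R_{3\alpha}$ the same Hessian bound combined with Lemma \ref{lA1} applied to $Y=DD^\top$, $F=1_{\{|D|>\tan\alpha\}}$, $G=|D|^2/\tan^2\alpha$ upgrades the truncation indicator into a clean moment $\norm{\E[DD^\top|D|^2\mid\mcl G]}_{H.S.}/\tan^3\alpha$.

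The main obstacle is $R_{4\alpha}$. Here I would exploit the symmetry of the exchangeable pair exactly as in Lemma \ref{l1} (the computation leading to \eqref{eq:xi}) to rewrite $\Xi_\alpha$ as a difference of third derivatives of $\tilde h_\alpha$. Rather than stopping at a third-derivative bound (which would cost the unaffordable $1/\sin^3\alpha$), I would continue Taylor expanding this difference to all orders $k\geq4$, apply Cauchy--Schwarz to separate the derivative norm from the conditional tensor moment, use the Fourier identity (cf.\ Eq.(17) of \cite{Bo20}) to control $\sum_k\tan^{2k-2}\alpha|\nabla^k\tilde h_\alpha|^2/((k-1)!\cos^2\alpha)$ by $M_1(h)\leq 1$, and then use Lemma \ref{lA2} combined with the pointwise bound $|D_\alpha|\leq\tan\alpha$ to reduce $|\E[D_\alpha^{\otimes k}\mid\mcl G]|$ to $\tan^{k-4}\alpha\,\norm{\E[DD^\top|D|^2\mid\mcl G]}_{H.S.}$. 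The resulting geometric series in $k$ converges and yields $R_{4\alpha}\leq C\lambda^{-1}\tan^{-3}\alpha\,\E\norm{\E[DD^\top|D|^2\mid\mcl G]}_{H.S.}$.

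Finally, I would combine these bounds with the smoothing inequality \eqref{14}, namely $d_{\mcl W}(W,Z)\leq\int_\eps^{\pi/2}|\E\mathscr{S}\tilde h_\alpha(W)|\tan\alpha\,d\alpha+2\sqrt d\sin(\eps/2)$. The $R_{2\alpha}\tan\alpha$ integral is bounded by a constant times $\E\norm{E}_{H.S.}$; the $R_{3\alpha}\tan\alpha$ and $R_{4\alpha}\tan\alpha$ integrands are of order $\tan^{-2}\alpha$, whose integral from $\eps$ to $\pi/2$ grows like $1/\eps$; and the $R_{1\alpha}\tan\alpha$ integral is exactly the first term in the stated bound. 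Sending $\eps\downarrow0$ on the terms that are integrable and optimizing $\eps$ against the $\sqrt d\,\eps$ boundary contribution for the $1/\tan^2\alpha$ piece (the balance occurs at $\eps\sim d^{-1/4}\sqrt{\lambda^{-1}\E\norm{\E[DD^\top|D|^2\mid\mcl G]}_{H.S.}}$) produces the claimed $d^{1/4}$-factor in front of the square root, completing the proof.
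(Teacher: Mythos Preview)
Your proposal is correct and follows essentially the same route as the paper's proof: the truncated exchangeability identity, the four-term decomposition $R_{1\alpha}$--$R_{4\alpha}$, the use of Lemma~\ref{lA1} for $R_{3\alpha}$, the symmetrization plus full Taylor expansion for $R_{4\alpha}$ with the Bonis-type bound on $\sum_k\tan^{2k-2}\alpha|\nabla^k\tilde h_\alpha|^2$, and the final $\eps$-optimization are all exactly as in the paper. The only detail you elide is that in the $R_{4\alpha}$ step the passage from $|\E[D_\alpha^{\otimes k}\mid\mcl G]|$ to $\tan^{k-4}\alpha\,\|\E[DD^\top|D|^2\mid\mcl G]\|_{H.S.}$ requires Lemma~\ref{lA1} again (not just the pointwise bound $|D_\alpha|\leq\tan\alpha$) to pull the scalar factor through the Hilbert--Schmidt norm of a conditional expectation, but since you already invoke that lemma for $R_{3\alpha}$ this is a harmless omission.
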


\begin{remark}
The assumptions $\Sigma=I_d$ and $R=0$ are not essential and can be easily relaxed.
The assumption $\Lambda=\lambda I_d$ is crucial to be able to apply Lemmas \ref{lA1} and \ref{lA2}.
Although Theorem \ref{tA2} is in a slightly more complicated form than Theorem \ref{t3}, it is in terms of conditional expectations $\E[\cdot |\mathcal{G}]$, which enables us to exploit further the independence structure in Wishart matrices. We note, however, Theorem \ref{tA2} is not strictly better than Theorem \ref{t3} even under these additional assumptions and does not improve the rate (e.g., for sums of independent random vectors) in general.
\end{remark}

\begin{proof}[Proof of Lemma \ref{lA1}]
Given a $d\times d$ symmetric matrix $A$, we write the ordered eigenvalues of $A$ as $\lambda_1(A)\leq\cdots\leq\lambda_d(A)$. 
For any $u\in\mathbb{R}^d$, we have
\[
u^\top(\E[YG\mid\mcl{G}]-\E[YF\mid\mcl{G}])u=\E[u^\top Y u\cdot(G-F)\mid\mcl{G}]\geq0.
\]
Therefore, $\E[YG\mid\mcl{G}]-\E[YF\mid\mcl{G}]$ is positive semidefinite. Similarly, we can prove $\E[YG\mid\mcl{G}]-(-\E[YF\mid\mcl{G}])$ is positive semidefinite. Thus, we have $|\lambda_j(\E[YF\mid\mcl{G}])|\leq\lambda_j(\E[YG\mid\mcl{G}])$ for all $j$ by Corollary 7.7.4 of \cite{HoJo13}. Hence we conclude
\[
\|\E[YF\mid\mcl{G}]\|_{H.S.}^2=\sum_{j=1}^d\lambda_j(\E[YF\mid\mcl{G}])^2\leq\sum_{j=1}^d\lambda_j(\E[YG\mid\mcl{G}])^2=\|\E[YG\mid\mcl{G}]\|_{H.S.}^2.
\]
This completes the proof.
\end{proof}

\begin{proof}[Proof of Lemma \ref{lA2}]
Taking the regular conditional probability distribution of $Y$ given $\mcl{G}$ instead of the original probability measure, it suffices to consider the unconditional case. 
Let $Y'$ be an independent copy of $Y$. Then
\ba{
\E(Y\cdot Y')^k&=\sum_{j_1,\dots,j_k=1}^d\E[Y_{j_1}Y'_{j_1}\cdots Y_{j_k}Y'_{j_k}]
=\sum_{j_1,\dots,j_k=1}^d\E[Y_{j_1}\cdots Y_{j_k}]\E[Y'_{j_1}\cdots Y'_{j_k}]\\
&=\sum_{j_1,\dots,j_k=1}^d\E[Y_{j_1}\cdots Y_{j_k}]^2
=|\E[Y^{\otimes k}]|^2.
}
Hence, we have by the Cauchy-Schwarz inequality
\ba{
|\E Y^{\otimes k}|^2
&\leq\E(Y\cdot Y')^2|Y|^{k-2}|Y'|^{k-2}
=\sum_{i,j=1}^d\E[Y_iY'_iY_jY'_j|Y|^{k-2}|Y'|^{k-2}]\\
&=\sum_{i,j=1}^d\E[Y_iY_j|Y|^{k-2}]\E[Y'_iY'_j|Y'|^{k-2}]
=\sum_{i,j=1}^d\E[Y_iY_j|Y|^{k-2}]^2\\
&=\|\E[YY^\top|Y|^{k-2}]\|_{H.S.}^2.
}
This completes the proof.
\end{proof}

\subsection{Proof of Theorem \ref{tA1}}
Now we apply Theorem \ref{tA2} to multivariate normal approximation of Wishart matrices.
Recall the basic notation from Sections \ref{sec2.1} and \ref{sec3.2}.
Note that now $d$ denotes the number of columns of the i.i.d.\ matrix $X=\{X_{ik}: 1\leq i\leq n, 1\leq k\leq d\}$ and the dimension is ${n \choose 2}$.

In \eq{12}, we have proved that
\be{
\E\norm{E}_{H.S.}\leq C\sqrt{\frac{n^3}{d}}.
}
We now first bound
\be{
\lambda^{-1}\int_0^{\pi/2} \E|\E[D1_{|D|>\tan \alpha}|\mathcal{G}]|\tan\alpha ~d\alpha.
}
Denote $D^{ik}$ to be the difference $W'-W$ given the random indices in the construction of exchangeable paris at the beginning of proof of Theorem \ref{t2} as $I=i, K=k$ for some $1\leq i\leq n$ and $1\leq k\leq d$.
Then $D^{ik}$ is regarded as an ${n \choose 2}$-vector $(D^{ik}_{lm}: 1\leq l<m\leq n)$
and
\ben{\label{fA5}
D^{ik}_{lm}=\frac{1}{\sqrt{d}}\left[\delta_{il} (X_{ik}^*-X_{ik}) X_{mk} +  \delta_{im} (X_{ik}^*-X_{ik}) X_{lk} \right].
}
Note that
\be{
\E[D1_{\{|D|>\tan \alpha\}}|\mathcal{G}]=\frac{1}{nd}\sum_{i=1}^n \sum_{k=1}^d \E[D^{ik}1_{|D^{ik}|>\tan \alpha}|\mathcal{G}]
}
and each conditional expectation has mean 0 by exchangeability.
Also, from its expression above, $D^{ik}_{lm}$ are independent across different $k$'s.
Therefore,
\besn{\label{fA3}
&\lambda^{-1}\E|\E[D1_{|D|>\tan \alpha}|\mathcal{G}]|\\
\leq & \frac{1}{2}\sqrt{\sum_{1\leq l<m\leq n}\sum_{k=1}^d \Var \left(\sum_{i=1}^n D^{ik}_{lm}1_{\{|D^{ik}>\tan \alpha|\}} \right)}\\
= & \frac{1}{2}\sqrt{\sum_{1\leq l<m\leq n}\sum_{k=1}^d \Var \left(D^{lk}_{lm}1_{\{|D^{lk}>\tan \alpha|\}}+
D^{mk}_{lm}1_{\{|D^{mk}>\tan \alpha|\}} \right)},
}
where the last equality is from the fact that $D^{ik}_{lm}$ is non-zero only if $i=l$ or $m$.
Now,
\bes{
&\frac{1}{2}\sqrt{\sum_{1\leq l<m\leq n}\sum_{k=1}^d \Var \left(D^{lk}_{lm}1_{\{|D^{lk}>\tan \alpha|\}} \right)}\\
\leq &\frac{1}{2\tan \alpha}\sqrt{\sum_{1\leq l<m\leq n}\sum_{k=1}^d \E \left[ (D^{lk}_{lm})^2 |D^{lk}|^2 \right]}\\
= &\frac{1}{2\tan \alpha}\sqrt{\sum_{1\leq l<m\leq n}\sum_{k=1}^d \E \left[ (D^{lk}_{lm})^2 
\left(\sum_{u: u<l} (D^{lk}_{ul})^2 +\sum_{v: v>l}(D^{lk}_{lv})^2    \right) \right]}\\
\leq &\frac{C}{\tan \alpha} \sqrt{n^2 d \frac{1}{d^2} n}=\frac{C}{\tan \alpha} \sqrt{\frac{n^3}{d}}.
}
Together with the same bound for the variance of the second term in \eq{fA3}, we obtain
\be{
\lambda^{-1}\int_0^{\pi/2} \E|\E[D1_{|D|>\tan \alpha}|\mathcal{G}]|\tan\alpha ~d\alpha\leq C\sqrt{\frac{n^3}{d}}.
}

Next, we bound
\be{
{n\choose 2}^{1/4} \sqrt{\lambda^{-1}\E\norm{\E[D D^\top |D|^2|\mathcal{G}]}_{H.S.} },
}
which is further bounded by 
\ben{\label{fA4}
{n\choose 2}^{1/4}  \left(\norm{\lambda^{-1} \E[DD^\top |D|^2]}_{H.S.}^{1/2}  
+\sqrt{\lambda^{-1}\E\norm{\E[D D^\top |D|^2|\mathcal{G}]-\E[DD^\top |D|^2]}_{H.S.}} \right).
}
For the first term in \eq{fA4}, we have
\bes{
&\norm{\lambda^{-1} \E[DD^\top |D|^2]}_{H.S.}^2
=\sum_{1\leq l<m\leq n\atop 1\leq u<v\leq n}(\lambda^{-1}\sum_{1\leq p<q\leq n} \E[D_{lm}D_{uv}D_{pq}^2])^2\\
=&\frac{1}{4}\sum_{1\leq l<m\leq n\atop 1\leq u<v\leq n} \left( \sum_{i=1}^n \sum_{k=1}^d \sum_{1\leq p<q\leq n}\E [D^{ik}_{lm}D^{ik}_{uv}(D^{ik}_{pq})^2]  \right)^2.
}
From \eq{fA5}, for the expectation to be non-zero, we must have $(l,m)=(u,v)$, $i=l$ or $m$, and $p$ or $q=i$. 
Hence,
\be{
{n\choose 2}^{1/4}  \left(\norm{\lambda^{-1} \E[DD^\top |D|^2]}_{H.S.}^2  \right)^{1/4}
\leq C {n\choose 2}^{1/4} \left( n^2(d n \frac{1}{d^2})^2  \right)^{1/4}\leq C\sqrt{\frac{n^3}{d}} .
}
For the second term in \eq{fA4},
note that
\ben{\label{fA4-2nd}
\lambda^{-1}\E\norm{\E[D D^\top |D|^2|\mathcal{G}]-\E[DD^\top |D|^2]}_{H.S.}
=\frac{1}{2}\E\left\|\sum_{k=1}^d\sum_{i=1}^nY^{ik}\right\|_{H.S.},
}
where
\[
Y^{ik}:=\E[D^{ik} (D^{ik})^\top |D^{ik}|^2|\mathcal{G}]-\E[D^{ik} (D^{ik})^\top |D^{ik}|^2].
\]
Since $Y^{ik}$ are independent across different $k$'s, we have by the symmetrization inequality (cf.~Lemma 6.3 in \cite{LeTa91})
\ba{
\E\left\|\sum_{k=1}^d\sum_{i=1}^nY^{ik}\right\|_{H.S.}
\leq2\E\left\|\sum_{k=1}^d\eps_k\sum_{i=1}^nY^{ik}\right\|_{H.S.},
}
where $\epsilon=\{\epsilon_1,\dots,\epsilon_d\}$ is a sequence of independent Rademacher variables independent of everything else. 
Then, we obtain by Jensen's inequality
\ba{
\E\left\|\sum_{k=1}^d\sum_{i=1}^nY^{ik}\right\|_{H.S.}
&\leq2\E\sqrt{\E\left[\left\|\sum_{k=1}^d\eps_k\sum_{i=1}^nY^{ik}\right\|_{H.S.}^2\mid X\right]}
=2\E\sqrt{\sum_{k=1}^d\left\|\sum_{i=1}^nY^{ik}\right\|_{H.S.}^2}\\
&\leq2\left(\E\left[\left(\sum_{k=1}^d\left\|\sum_{i=1}^nY^{ik}\right\|_{H.S.}^2\right)^{3/4}\right]\right)^{2/3}\\
&=2\left(\E\left[\left(\sum_{k=1}^d\sum_{1\leq l<m\leq n\atop 1\leq u<v\leq n}\left(\sum_{i=1}^nY^{ik}_{lm,uv}\right)^2\right)^{3/4}\right]\right)^{2/3},
}
where
\[
Y^{ik}_{lm,uv}:=\E[D^{ik}_{lm}D^{ik}_{uv} |D^{ik}|^2|\mathcal{G}]-\E[D^{ik}_{lm} D^{ik}_{uv} |D^{ik}|^2].
\]
Using the elementary inequality $(\sum_jx_j)^{3/4}\leq \sum_jx_j^{3/4}$ for $x_j\geq0$, we conclude
\ba{
\E\left\|\sum_{k=1}^d\sum_{i=1}^nY^{ik}\right\|_{H.S.}
\leq2\left(\sum_{k=1}^d\sum_{1\leq l<m\leq n\atop 1\leq u<v\leq n}\E\left[\left|\sum_{i=1}^nY^{ik}_{lm,uv}\right|^{3/2}\right]\right)^{2/3}.
}
Now, note that from the expression \eq{fA5}, for $D^{ik}_{lm} D^{ik}_{uv}$ to be non-zero, $\{l,m\}$ and $\{u, v\}$ must have at least one element in common and $i$ must be such a common element. There are $\sim n^3$ such combinations.
For a typical term with some $k=1,\dots, d$,
\ba{
\E\left[\left|Y^{lk}_{lm,lv}\right|^{3/2}\right]
&\leq C\E\left[\left|D^{lk}_{lm}D^{lk}_{lv} |D^{lk}|^2\right|^{3/2}\right]\\
&=C\E\left[\left|\sum_{q: q>l} D^{lk}_{lm} D^{lk}_{lv} (D^{lk}_{lq})^2   +\sum_{p: p<l} D^{lk}_{lm} D^{lk}_{lv} (D^{lk}_{pl})^2\right|^{3/2}\right]\\
&=C\E\left[\left|\frac{1}{d^2} \sum_{q: q\ne l} (X_{lk}^*-X_{lk})^4 X_{mk} X_{vk} X_{qk}^2\right|^{3/2}\right]
}
Since $l\neq m$ and $l\neq v$, we obtain 
\ba{
\E\left[\left|Y^{lk}_{lm,lv}\right|^{3/2}\right]
&\leq\frac{C}{d^3}\E\left[(X_{lk}^*-X_{lk})^6\right]\E\left[\left|X_{mk} X_{vk} \sum_{q: q\ne l}   X_{qk}^2\right|^{3/2}\right]\\
&\leq\frac{C}{d^3}\E\left[X_{lk}^6\right]^{3/2}\sqrt{\E\left[\left(\sum_{q: q\ne l}   X_{qk}^2\right)^{3}\right]}
\leq\frac{C}{d^3}n^{3/2},
}
assuming the existence of finite sixth moment. Hence we conclude
\ba{
{n\choose 2}^{1/4} \sqrt{\E\left\|\sum_{k=1}^d\sum_{i=1}^nY^{ik}\right\|_{H.S.}}
\leq Cn^{1/2}\left(dn^3\frac{n^{3/2}}{d^3}\right)^{1/3}
=C\frac{n^2}{d^{2/3}}=C\left(\frac{n^3}{d}\right)^{2/3}.
}
Therefore, the second term in \eq{fA4} is bounded by $C(n^3/d)^{2/3}$ due to \eqref{fA4-2nd}. 
Theorem \ref{tA1} is proved by combining the above bounds.

\section*{Acknowledgements}

We thank the two anonymous referees for their careful reading of the manuscript and for their valuable suggestions which led to many improvements.
Fang X. was partially supported by Hong Kong RGC ECS 24301617 and GRF 14302418 and 14304917, a CUHK direct grant and a CUHK start-up grant. 
Koike Y. was partially supported by JST CREST Grant Number JPMJCR14D7 and JSPS KAKENHI Grant Numbers JP17H01100, JP18H00836, JP19K13668.


\begin{thebibliography}{22}
\providecommand{\natexlab}[1]{#1}
\providecommand{\url}[1]{\texttt{#1}}
\expandafter\ifx\csname urlstyle\endcsname\relax
  \providecommand{\doi}[1]{doi: #1}\else
  \providecommand{\doi}{doi: \begingroup \urlstyle{rm}\Url}\fi
  
  
\bibitem[Anastasiou and Gaunt(2020)]{AnGa20}
A. Anastasiou and R. E. Gaunt (2020).
Wasserstein distance error bounds for the multivariate normal approximation of the maximum likelihood estimator. 
\emph{Preprint.} Available at \url{https://arxiv.org/pdf/2005.05208}

\bibitem[Bonis(2020)]{Bo20}
T. Bonis (2020).
Stein's method for normal approximation in Wasserstein distances with application to the multivariate central limit theorem.
\emph{Probab. Theory Related Fields}. 
\url{https://doi.org/10.1007/s00440-020-00989-4}   
  
\bibitem[Bubeck et al.(2016)]{Bu16}
S. Bubeck, J. Ding, R. Eldan and M.Z. R\'acz (2016).
Testing for high-dimensional geometry in random graphs. 
\emph{Random Structures Algorithms} \textbf{49}, 503--532.

\bibitem[Bubeck and Ganguly(2018)]{BuGa18}
S. Bubeck and S. Ganguly (2018).
Entropic CLT and phase transition in high-dimensional Wishart matrices. 
\emph{Int. Math. Res. Not.} \textbf{2018}, 588--606.


\bibitem[Chatterjee and Meckes(2008)]{ChMe08}
S. Chatterjee and E. Meckes (2008). 
Multivariate normal approximation using exchangeable pairs. 
\emph{Alea} \textbf{4}, 257--283.

\bibitem[Chatterjee and Shao(2011)]{ChSh11}
S. Chatterjee and Q.M. Shao (2011).
Nonnormal approximation by Stein's method of exchangeable pairs with application to the Curie-Weiss model. 
\emph{Ann. Appl. Probab.} \textbf{21}, 464--483.

\bibitem[Chen and Fang(2015)]{ChFa15}
L.H.Y. Chen and X. Fang (2015).
On the error bound in a combinatorial central limit theorem. 
\emph{Bernoulli} \textbf{21}, 335--359.

\bibitem[Chen, Fang and Shao(2013)]{ChFaSh13}
L.H.Y. Chen, X. Fang and Q.M. Shao (2013).
From Stein identities to moderate deviations. 
\emph{Ann. Probab.} \textbf{41}, 262--293.

\bibitem[Chen, Goldstein and Shao(2011)]{ChGoSh11}
L.H.Y. Chen, L. Goldstein and Q.M. Shao (2011).
\emph{Normal approximation by Stein's method}. Probability and its Applications (New York). 
Springer, Heidelberg, xii+405 pp.

\bibitem[Davydov and Rotar(2009)]{DaRo09}
Y. Davydov and V. Rotar (2009).
On asymptotic proximity of distributions. 
\emph{J. Theoret. Probab.} \textbf{22}, 82--98.



\bibitem[de Jong(1990)]{deJo90}
P. de Jong (1990).
A central limit theorem for generalized multilinear forms. 
\emph{J. Multivariate Anal.} {\bf 34}, 275--289. 

\bibitem[Diaconis(1977)]{Di77}
P. Diaconis (1977).
The distribution of leading digits and uniform distribution ${\rm mod}$ $1$. 
\emph{Ann. Probab.} \textbf{5}, 72--81.

\bibitem[D\"obler and Peccati(2017)]{DoPe17}
C.~D\"obler and G.~Peccati (2017).
Quantitative de Jong theorems in any dimension.
\emph{Electron. J. Probab.} \textbf{22}, 1--35.

\bibitem[D\"obler, Vidotto and Zheng(2018)]{DoViZh18}
C. D\"obler, A. Vidotto and G. Zheng (2018). 
Fourth moment theorems on the Poisson space in any dimension. 
\emph{Electron. J. Probab.} \textbf{23}, 1--27.


\bibitem[Eldan, Mikulincer and Zhai(2018)]{ElMiZh18}
R. Eldan, D. Mikulincer and A. Zhai (2018).
The CLT in high dimensions: quantitative bounds via martingale embedding.
To appear in \textit{Ann. Probab.}
Preprint available at \url{https://arxiv.org/abs/1806.09087}

\bibitem[Fulman(2005)]{Fu05}
J. Fulman (2005).
Stein's method and Plancherel measure of the symmetric group. 
\emph{Trans. Amer. Math. Soc.} \textbf{357}, 555--570. 

\bibitem[Gaunt(2016)]{Ga16}
R.E. Gaunt (2016).
Rates of convergence in normal approximation under moment conditions via new bounds on solutions of the Stein equation. 
\emph{J. Theoret. Probab.} \textbf{29}, 231--247.

\bibitem[Harper(2009)]{Ha09}
A.J. Harper (2009).
Two new proofs of the Erd\"os-Kac theorem, with bound on the rate of convergence, by Stein's method for distributional approximations. 
\emph{Math. Proc. Cambridge Philos. Soc.} \textbf{147}, 95--114.

\bibitem[Hohn and Johnson(2013)]{HoJo13}
R. A. Hohn and C. R. Johnson (2013).
\emph{Matrix Analysis} (2nd edition). 
Cambridge University Press.

\bibitem[Jiang and Li(2015)]{JiLi15}
T. Jiang and D. Li (2015).
Approximation of rectangular beta-Laguerre ensembles and large deviations. 
\emph{J. Theoret. Probab.} \textbf{28}, 804--847.

\bibitem[Koike(2019)]{Ko19}
Y. Koike (2019).
High-dimensional central limit theorems for homogeneous sums. 
\emph{Preprint.} Available at \url{https://arxiv.org/pdf/1902.03809.pdf}

\bibitem[Ledoux and Talagrand(1991)]{LeTa91}
M. Ledoux and M. Talagrand (1991).
\emph{Probability in Banach Spaces.} 
Springer.


\bibitem[Meckes(2009)]{Me09}
E. Meckes (2009).
On Stein's method for multivariate normal approximation. 
In: C. Houdr\'e, V. Koltchinskii, D. M. Mason and M. Peligrad (eds.) \emph{High Dimensional Probability V: The Luminy Volume}. \textbf{5}, 153--178. Institute of Mathematical Statistics, Beachwood, Ohio, USA.

\bibitem[Mikulincer(2020)]{Mi20}
D. Mikulincer (2020).
A CLT in Stein's distance for generalized Wishart matrices and higher order tensors.
\emph{Preprint.} Available at \url{https://arxiv.org/pdf/2002.10846.pdf}

\bibitem[Nourdin and Peccati(2015)]{NoPe15}
I. Nourdin and G. Peccati (2015).
The optimal fourth moment theorem. 
\emph{Proc. Amer. Math. Soc.} \textbf{143}, 3123--3133.

\bibitem[Nourdin, Peccati, Poly and Simone(2016)]{NPPS16}
I. Nourdin, G. Peccati, G. Poly and R. Simone (2016).
Multidimensional limit theorems for homogeneous sums: A survey and a general transfer principle. 
\emph{ESAIM Probab. Stat.} \textbf{20}, 293--308.

\bibitem[Nourdin, Peccati and Reinert(2010)]{NoPeRe10}
I. Nourdin, G. Peccati and G. Reinert (2010).
Invariance principles for homogeneous sums: Universality of Gaussian Wiener chaos. 
\emph{Ann. Probab.} \textbf{38}, 1947--1985.

\bibitem[Nourdin and Rosi\'{n}ski(2014)]{NoRo14}
I. Nourdin and J. Rosi\'{n}ski (2014).
Asymptotic independence of multiple Wiener--It\^{o} integrals and the resulting limit laws. 
\emph{Ann. Probab.} \textbf{42}, 497--526.


\bibitem[Nourdin and Zheng(2018)]{NoZh18}
I. Nourdin and G. Zheng (2018).
Asymptotic behavior of large Gaussian correlated Wishart matrices.
\emph{Preprint.} Available at \url{https://arxiv.org/pdf/1804.06220.pdf}

\bibitem[Nourdin and Zheng(2019)]{NoZh17}
I. Nourdin and G. Zheng (2019).
Exchangeable pairs on Wiener chaos.
In: N. Gozlan, R. Lata{\l}a, K. Lounici and M. Madiman (eds.) \emph{High Dimensional Probability VIII}. Progress in Probability \textbf{74}, 277--303. Birkh\"auser, Cham.

\bibitem[Peccati and Tudor(2005)]{PeTu05}
G. Peccati and C. A. Tudor (2005).
Gaussian limits for vector-valued multiple stochastic integrals. 
In: M. \'Emery, M. Ledoux and M. Yor (eds.) \emph{S\'eminaire de Probabilit\'es XXXVIII}. Lecture Notes in Mathematics \textbf{1857}, 247--262. Springer, Berlin, Heidelberg.

\bibitem[R\'acz and Richey(2019)]{RaRi19}
M.Z. R\'acz and J. Richey (2019).
A smooth transition from Wishart to GOE. 
\emph{J. Theoret. Probab.} \textbf{32}, 898--906.

 \bibitem[Rai\v{c}(2019)]{Ra19}
M. Rai\v{c} (2019).
A multivariate central limit theorem for Lipschitz and smooth test functions. 
\emph{Preprint.} Available at \url{https://arxiv.org/abs/1812.08268}

\bibitem[Reinert and R\"ollin(2009)]{ReRo09}
G. Reinert and A. R\"ollin (2009).
Multivariate normal approximation with Stein's method of exchangeable pairs under a general linearity condition. 
\emph{Ann. Probab.} \textbf{37}, 2150--2173.

\bibitem[Rinott and Rotar(1997)]{RiRo97}
Y. Rinott and V. Rotar (1997).
On coupling constructions and rates in the CLT for dependent summands with applications to the antivoter model and weighted $U$-statistics. 
\emph{Ann. Appl. Probab.} \textbf{7}, 1080--1105.

\bibitem[Shao and Su(2006)]{ShSu06}
Q.M. Shao and Z.G. Su (2006).
The Berry-Esseen bound for character ratios. 
\emph{Proc. Amer. Math. Soc.} \textbf{134}, 2153--2159.

\bibitem[Shao and Zhang(2019)]{ShZh19}
Q.M. Shao and Z.S. Zhang (2019).
Berry-Esseen bounds of normal and nonnormal approximation for unbounded exchangeable pairs. 
\emph{Ann. Probab.} \textbf{47}, 61--108. 


\bibitem[Stein(1986)]{St86}
C. Stein (1986).
\emph{Approximate computation of expectations.} 
Institute of Mathematical Statistics Lecture Notes-Monograph Series, \textbf{7}. Institute of Mathematical Statistics, Hayward, CA, 1986. 

\bibitem[Zhai(2018)]{Zh18}
A. Zhai. (2018).
A high-dimensional CLT in $\mathcal{W}_2$ distance with near optimal convergence rate. 
\emph{Probab. Theory Related Fields} {\bf 170}, no. 3-4, 821--845. 

\bibitem[Zheng(2018)]{Zheng18}
G. Zheng. (2018). 
\emph{Recent developments around the Malliavin--Stein approach --- Fourth moment phenomena via exchangeable pairs.} Thesis (Ph.D.)-University of Luxembourg. 

\bibitem[Zheng(2019)]{Zh19}
G. Zheng. (2019).
A Peccati-Tudor type theorem for Rademacher chaoses. 
\emph{ESAIM Probab. Stat.} {\bf 23}, no. 3-4, 874--892. 

\end{thebibliography}

\end{document}